\documentclass{article}
\usepackage[dvipdfmx]{graphicx}
\usepackage{graphics}
\usepackage{xcolor}

\usepackage{array}
\usepackage{booktabs}
\usepackage{amsthm}
\usepackage{amssymb}
\usepackage{amsfonts}
\usepackage{mathrsfs}
\usepackage{amsmath}
\usepackage[all]{xy}
\usepackage{amscd}
\usepackage{enumerate}
\usepackage{flafter}

\allowdisplaybreaks

\theoremstyle{definition}
\newtheorem{definition}{Definition}[section]
\newtheorem{remark}[definition]{Remark}
\newtheorem{example}[definition]{Example}
\theoremstyle{plain}
\newtheorem{proposition}[definition]{Proposition}
\newtheorem{lemma}[definition]{Lemma}
\newtheorem{theorem}[definition]{Theorem}
\newtheorem{corollary}[definition]{Corollary}

\def\dim{\mathop{\mathrm{dim}}\nolimits}

\def\Ker{\mathop{\mathrm{Ker}}\nolimits}
\def\Coker{\mathop{\mathrm{Coker}}\nolimits}

\newcommand{\Q}{{\mathbb{Q}}}

\newcommand{\Mat}{{\rm Mat}}
\newcommand{\PP}{\mathcal{P}}
\newcommand{\NF}{{\rm NF}}

\DeclareMathOperator{\gcdop}{gcd}

\newcommand{\lcm}{{\rm lcm}}
\newcommand{\lex}{lex}
\newcommand{\ideal}[1]{\left\langle #1 \right\rangle}

\DeclareMathOperator{\dimQ}{dim_{\Q}}
\DeclareMathOperator{\inop}{in}

\newcommand{\LM}{\operatorname{LM}}
\newcommand{\AG}{\mathrm{AG}}

\newcommand{\Z}{\mathbb{Z}}
\newcommand{\K}{\mathbb{K}}

\newcommand{\Fitt}{\operatorname{Fitt}}

\newcommand{\safeincludegraphics}[2][]{%
  \IfFileExists{#2}{\includegraphics[#1]{#2}}{\fbox{\texttt{Missing figure: #2}}}%
}
\newcommand{\pcc}[2]{\mbox{$\begin{array}{c}
\safeincludegraphics[scale=#2]{#1.pdf}
\end{array}$}}
\newcommand{\pc}[2]{\mbox{$\begin{array}{c}
\safeincludegraphics[scale=#2]{#1.eps}
\end{array}$}}

\title{Gr\"obner Bases for Alexander Fitting Ideals of Links}
\author{Takefumi Nosaka\footnote{E-mail address: {\tt nosaka@math.titech.ac.jp}}
 \and Kotaro Shimizu\footnote{E-mail address: {\tt shimizu.k.38ee@m.isct.ac.jp}}}
\date{}

\begin{document}

\maketitle

\begin{abstract}
Multivariable Alexander theory produces Fitting ideals without preferred
generators.  After fixing a coefficient field and a monomial order, we
represent these ideals by reduced Gr\"obner bases of their polynomial
contractions, obtaining Alexander--Gr\"obner invariants of links.
Over \(\mathbb Q\), for the maximal free-abelian coefficient system of
a connected compact oriented \(3\)-manifold whose nonempty boundary is
a disjoint union of tori, we deduce determinant-divisor reciprocity 
from Blanchfield duality.
We compute these invariants for torus links and low-crossing links, and
we determine \(\AG_2\) for a two-component pretzel family.
\end{abstract}

\begin{center}
\normalsize
\baselineskip=11pt
{\bf Keywords.}\\
Gr\"obner bases, multivariable Alexander polynomial, Fitting ideals, Fox calculus, link invariants
\end{center}

\begin{center}
\normalsize
\baselineskip=11pt
{\bf 2020 Mathematics Subject Classification.}\\
Primary 57K10; Secondary 57K14, 13P10, 57M25, 57M05
\end{center}

\section{Introduction}\label{sec:introduction}

Multivariable Alexander theory associates to a finitely presented group
\(G\), equipped with an epimorphism
\(\phi\colon G\twoheadrightarrow\Z^\ell\), a module over the Laurent
polynomial ring
$
 \Lambda=\K[t_1^{\pm1},\dots,t_\ell^{\pm1}],
$
and hence a sequence of Fitting ideals, traditionally called elementary
ideals.  For link complements, these are classical invariants
\cite{CF,FT54}.  With the relative Alexander module used in this paper,
the usual one-variable Alexander polynomial is recovered from the torsion
part in the knot case, while for a classical link with \(\ell>1\), the
first elementary ideal \(E_1\) and the multivariable Alexander polynomial
determine one another up to the usual units of \(\Lambda\)
(Examples~\ref{ex:l1} and~\ref{ex:k1}).  Higher elementary ideals may be
proper and nonprincipal.  Their greatest common divisors retain the
divisorial, or height-one, information, whereas the full ideals may also
carry information in higher codimension.  A basic practical difficulty is
that an ideal has no distinguished finite generating set.

The main purpose of this paper is to replace arbitrary generating sets of
Alexander Fitting ideals by unique finite normal forms, relative to
explicit algebraic choices.  Given a finite presentation of \(G\), let \(A\) be the
Fox matrix associated with \(\phi\), define
\[
 \mathcal A(G,\phi):=\Coker(A^{\mathsf T}),
 \qquad
 E_k(G,\phi):=\Fitt_k(\mathcal A(G,\phi))\subset\Lambda,
\]
and call \(\mathcal A(G,\phi)\) the relative Alexander module.  Its
topological interpretation shows that \(\mathcal A(G,\phi)\), and hence
every \(E_k(G,\phi)\), depends only on the pair \((G,\phi)\)
(Theorem~\ref{thm:CF-invariance}).  Fix the polynomial subring
$
 P=\K[t_1,\dots,t_\ell]\subset\Lambda
$
and a monomial order on \(P\).  Since \(\Lambda\) is obtained from \(P\)
by inverting the variables, every ideal \(I\subset\Lambda\) is recovered
from its contraction \(I\cap P\) (Lemma~\ref{lem:contraction-injective}).
We therefore define
\[
 \AG_k(G,\phi)
 :=\mathcal G_P\bigl(E_k(G,\phi)\cap P\bigr),
\]
where \(\mathcal G_P(J)\) denotes the reduced Gr\"obner basis of
\(J\subset P\) with respect to the fixed order
(Definition~\ref{def:AG}).  Thus \(\AG_k(G,\phi)\) is not a coarser replacement for
\(E_k(G,\phi)\): it is a unique finite representative of the original
Laurent ideal once the coefficient field, the ordered basis of
\(\Z^\ell\), and the monomial order have been fixed.  For oriented ordered
spherical codimension-two links, the positive meridians determine the
coefficient system, and the construction gives link invariants
(Subsection~\ref{sec:links} and
Corollary~\ref{cor:link-invariant}).  
For classical links with more than one component, \(\AG_1\) carries
precisely the image, in the fixed coefficient field, of the usual
multivariable Alexander-polynomial information, so the main new phenomena occur for
\(k>1\) (Example~\ref{ex:k1}).

We first examine the higher invariants through exact computations for
low-crossing classical links over \(\Q\).  The resulting bases include
proper, nonzero, nonprincipal second elementary ideals and display
substantial variation in both size and algebraic form
(Example~\ref{ex:k2} and
Table~\ref{tab:AG2_8cross_3comp_filtered}).  More importantly, among the
chosen oriented ordered two- and three-component representatives with at
most \(10\) crossings, three pairs are exhibited explicitly and fourteen
additional pairs are listed for which the multivariable Alexander
polynomials agree up to Laurent units but the values of \(\AG_2\) differ
(Example~\ref{ex:AG-refines-Alexander} and
Remark~\ref{rem:further-Alexander-collisions}).  
These computations show,
within the stated data set and conventions, that \(\AG_2\) is not
determined by the multivariable Alexander polynomial
(Example~\ref{ex:AG-refines-Alexander}).  We also record a separate
computational experiment for certain three-component pretzel
links; it is included as exploratory evidence and is not used in the
proofs of the subsequent results
(Example~\ref{ex:3pretzel-experiment}).

The main uniform calculations concern two infinite families.  For the
torus links
$  T(\ell n,\ell m),
 \qquad \ell>1,\qquad \gcdop(m,n)=1$,
we determine \(E_k\), its contraction to the polynomial ring, and the
reduced Gr\"obner basis \(\AG_k\) for every \(1\le k\le\ell\)
(Proposition~\ref{prop:toruslink-Ek} and
Corollary~\ref{cor:toruslink-groebner}).  For the two-component pretzel
family $
 P(2k,2r-1,2m)$,
we identify the contraction of \(E_2\) with an explicit three-generated
ideal in \(\Q[x,y]\), and a cyclotomic local decomposition yields a
uniform Gr\"obner-basis construction from which \(\AG_2\) is obtained by
interreduction (Proposition~\ref{prop:pretzel-reduction},
Lemma~\ref{lem:pretzel-local-uniform}, and
Theorem~\ref{thm:pretzel-uniform}).  The cases
\(\gcdop(m,k)=1\) and \(\gcdop(m,k)>1\) are then described separately
(Theorems~\ref{thm:pretzel-g1} and~\ref{thm:pretzel-gt1}).

The reciprocal patterns visible in these computations motivate the final
section, whose role is to isolate the part of the phenomenon forced by
duality.  We work over \(\Q\) with the maximal free-abelian coefficient
system of a connected compact oriented \(3\)-manifold whose nonempty
boundary is a union of tori.  No assertion is made here for an arbitrary
epimorphism onto a free abelian group
(Remark~\ref{rem:scope-of-reciprocity}).  Blanchfield's theory implies
reciprocity for every determinant divisor of the relative Alexander module
(Theorem~\ref{thm:determinant-reciprocity}).  For a classical link, this in
particular gives reciprocity of a greatest common divisor of the elements
of \(\AG_k\) (Corollary~\ref{cor:AG-common-divisor-reciprocity}).  At the
level of full ideals, a Fitting ideal and its involutive image agree after
localization at every height-one prime, so any remaining discrepancy is
supported in codimension at least two
(Proposition~\ref{prop:codimension-two-defect}).  We do not claim full
ideal reciprocity in general.  Instead, we give an exact finite
Gr\"obner-basis criterion for it
(Proposition~\ref{prop:groebner-full-reciprocity}).  Thus the final section
explains the codimension-one part of the observed reciprocity and provides
a precise test for the stronger property appearing in the computed
examples.

The paper is organized as follows.  Section~\ref{sec:preliminaries}
reviews Fox derivatives, relative Alexander modules, Fitting ideals, and
their topological interpretation.  Section~\ref{sec:AG-definition}
introduces the Alexander--Gr\"obner invariants, specializes them to links,
and records the low-crossing and exploratory computations.
Section~\ref{sec:families} contains the explicit calculations for torus
links and two-component pretzel links.  Finally,
Section~\ref{sec:reciprocity} treats determinant-divisor reciprocity, the
codimension-two defect, and the Gr\"obner-basis criterion for full ideal
reciprocity.

\medskip
\noindent\textbf{Notation and conventions.}
The integers \(n,m,\ell\) are positive unless otherwise stated, and \(\K\)
denotes a field.  
By $\Lambda$ we mean the Laurent polynomial ring $\K[t_1^{\pm1},\dots,t_\ell^{\pm1}]$,
and by $P$ we mean the polynomial subring $\K[t_1, \dots,t_\ell]$.
Whenever Gr\"obner bases are considered, we
use the monomial order, including the order of the variables, fixed in
Remark~\ref{rem:term-order}.

\section{Fox derivatives, Fitting ideals, and topology}\label{sec:preliminaries}

This section recalls the Fox-calculus description of the relative Alexander
module, its Fitting ideals, and their topological interpretation.

\subsection{Fox derivatives, the relative Alexander module, and Fitting ideals}\label{subsec:fox}

Fix a finitely presented group \(G\) and an epimorphism
\(\phi\colon G\twoheadrightarrow \Z^\ell\), and put
\(\Lambda:=\K[t_1^{\pm1},\dots,t_\ell^{\pm1}]\).
Here \(t_i\) corresponds to the \(i\)th standard basis vector of
\(\Z^\ell\). All \(\Lambda\)-modules are left modules, and matrices are
written with respect to column vectors.

Choose a finite presentation
\[
G=\langle x_1,\dots,x_n\mid r_1,\dots,r_m\rangle,
\]
and let \(F_n\) be the free group on \(x_1,\dots,x_n\). For each \(j\),
the Fox derivative \(\partial/\partial x_j\colon\Z[F_n]\to\Z[F_n]\) is
characterized by
\[
\frac{\partial x_i}{\partial x_j}=\delta_{ij},
\qquad
\frac{\partial(uv)}{\partial x_j}
 =\frac{\partial u}{\partial x_j}
  +u\frac{\partial v}{\partial x_j}
\quad (u,v\in F_n).
\]
In particular,
\(\partial x_i^{-1}/\partial x_j=-x_i^{-1}\delta_{ij}\); see
\cite{Fox1953}.

The composite of \(F_n\twoheadrightarrow G\) with \(\phi\) induces a ring
homomorphism \(\Phi\colon\Z[F_n]\to\Lambda\). Thus, if
\(\phi(x_j)=(\alpha_1,\dots,\alpha_\ell)\), then
\(\Phi(x_j)=t_1^{\alpha_1}\cdots t_\ell^{\alpha_\ell}\). Define the Fox
matrix by
\[
A=
\left(
 \Phi\!\left(\frac{\partial r_i}{\partial x_j}\right)
\right)_{1\le i\le m,\;1\le j\le n}
\in\Mat_{m\times n}(\Lambda).
\]
With our column-vector convention, \(A^{\mathsf T}\) is a presentation
matrix. The \emph{relative Alexander module} of \((G,\phi)\) is defined by
\[
\Lambda^m\xrightarrow{\;A^{\mathsf T}\;}\Lambda^n
 \longrightarrow \mathcal A(G,\phi)
 \longrightarrow 0.
\]
Equivalently, \(\mathcal A(G,\phi)=\Coker(A^{\mathsf T})\).

Next, we review Fitting ideals.
Let \(R\) be a commutative ring and let \(M\) be an \(R\)-module with a
finite presentation
\[
R^q\xrightarrow{\;B\;}R^p
 \longrightarrow M
 \longrightarrow 0.
\]
For an integer \(r\), let \(I_r(B)\subset R\) be the ideal generated by the
\(r\times r\) minors of \(B\), with the conventions
\(I_r(B)=0\) for \(r>\min\{p,q\}\) and \(I_r(B)=R\) for \(r\le0\).
The \(k\)th Fitting ideal of \(M\) is
\[
\Fitt_k(M):=I_{p-k}(B).
\]
It is independent of the chosen finite presentation; see
\cite[\S20]{Eis1}.

For the relative Alexander module, set
\[
E_k(G,\phi)
 :=\Fitt_k(\mathcal A(G,\phi))
 =I_{n-k}(A^{\mathsf T})
 =I_{n-k}(A)
 \subset\Lambda.
\]
The last equality follows because determinantal ideals are unchanged by
transposition. We call \(E_k(G,\phi)\) the \emph{\(k\)th elementary ideal} of
\((G,\phi)\).

\subsection{Topological interpretation}\label{subsec:topological-meaning}

Let \(W_{\mathcal{P}}\) be the presentation \(2\)-complex associated with the chosen
presentation $\mathcal{P}$ of \(G\). Thus \(W_{\mathcal{P}}\) has one \(0\)-cell, \(n\) \(1\)-cells,
and \(m\) \(2\)-cells, and \(\pi_1(W_{\mathcal{P}})\cong G\). The following standard
identification also proves presentation independence; compare
\cite{CF,Fox1953,Lyn}.

\begin{theorem}\label{thm:CF-invariance}
With the local coefficient system induced by \(\phi\), there is an
isomorphism
\[
\mathcal A(G,\phi)\cong H_1(W_{\mathcal{P}},*;\Lambda).
\]
Consequently, \(\mathcal A(G,\phi)\), and hence every \(E_k(G,\phi)\),
depends only on the pair \((G,\phi)\) and not on the chosen finite
presentation.
\end{theorem}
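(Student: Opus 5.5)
We compute the relative cellular chain complex of the pair \((\widetilde W_{\mathcal P},\widetilde *)\) for the \(\Z^\ell\)-cover and identify its bottom differential with \(A^{\mathsf T}\). Let \(p\colon\widetilde W_{\mathcal P}\to W_{\mathcal P}\) be the covering associated with \(\Ker\phi\), with deck group \(\Z^\ell\), and let \(\widetilde *=p^{-1}(*)\). After choosing a lift \(\tilde *\) of the basepoint and lifts of the cells starting there, \(C_*(\widetilde W_{\mathcal P},\widetilde *;\K)\) becomes a complex of free \(\Lambda\)-modules:
\[
0\to\Lambda^m\xrightarrow{\partial_2}\Lambda^n\to 0,
\]
where \(\Lambda^n\) has basis the lifted \(1\)-cells \(\tilde x_j\) and \(\Lambda^m\) the lifted \(2\)-cells \(\tilde r_i\). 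The relative \(C_0\) vanishes because every \(0\)-cell lies in \(\widetilde*\). We define \(H_1(W_{\mathcal P},*;\Lambda)\) as the first homology of this complex, that is, as \(\Coker\partial_2\).

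The key step is Fox's fundamental formula. Lift a word \(w\in F_n\), starting at \(\tilde *\), to a path in the \(1\)-skeleton of \(\widetilde W_{\mathcal P}\). Its relative \(1\)-chain is \(\sum_j\Phi(\partial w/\partial x_j)\,\tilde x_j\). We prove this by induction on word length. The product rule \(\partial(uv)=\partial u+u\,\partial v\) reflects that the lift of \(uv\) consists of the lift of \(u\) followed by the \(\Phi(u)\)-translate of the lift of \(v\). The case \(x_i^{-1}\) gives \(-x_i^{-1}\), since that lift is the edge \(x_i^{-1}\tilde x_i\) traversed backwards.

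Since the attaching map of the \(2\)-cell \(r_i\) is the loop \(r_i\), we obtain \(\partial_2(\tilde r_i)=\sum_j\Phi(\partial r_i/\partial x_j)\tilde x_j\). With the column-vector convention, \(\partial_2=A^{\mathsf T}\), and hence \(\mathcal A(G,\phi)=\Coker(A^{\mathsf T})\cong H_1(W_{\mathcal P},*;\Lambda)\). The main bookkeeping obstacle is to check that the left-module and transpose conventions match the Fox derivative's left-multiplicative product rule. Since \(\Lambda\) is commutative, one only has to be consistent about whether translation by \(\Phi(u)\) acts on the left.

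For presentation independence, let \(\mathcal P,\mathcal P'\) be two finite presentations of \(G\). By Tietze's theorem they are related by a finite sequence of Tietze moves, and each move changes \(W_{\mathcal P}\) by a homotopy equivalence relative to the basepoint that is compatible with the identifications of \(\pi_1\) with \(G\), and hence with \(\phi\). Adding a generator \(x\) together with a relator \(xw^{-1}\) is an elementary collapse. Adding a consequence relator attaches a \(2\)-cell along a null-homotopic loop, which gives \(W\vee S^2\) up to homotopy; this is not a homotopy equivalence, so we argue on the module level instead. The new row of \(A\) is a \(\Lambda\)-combination of old rows, because Fox derivatives of consequences of relators are combinations of the derivatives of the relators modulo the kernel of \(\Phi\). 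Therefore the cokernel \(\Coker(A^{\mathsf T})\) is unchanged, since adding a column that is a combination of existing columns does not change the image.

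Alternatively, we can observe directly that \(H_1(W,*;\Lambda)\) only depends on the \(2\)-skeleton of \(K(G,1)\) up to relative \(H_1\). Indeed, \(H_1(\widetilde W,\widetilde *)\cong H_1(\widetilde{K(G,1)},\widetilde *)\), since attaching cells of dimension \(\ge3\), or \(2\)-cells killing nothing in \(\pi_1\), does not affect relative \(H_1\) of the cover. This last observation settles the general case cleanly. Finally, since \(\Fitt_k\) depends only on the module by the independence statement cited from \cite[\S20]{Eis1}, each \(E_k(G,\phi)\) is an invariant of \((G,\phi)\).
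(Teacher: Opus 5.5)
Your proof is correct. The identification $\mathcal A(G,\phi)\cong H_1(W_{\mathcal P},*;\Lambda)$ follows the paper: relative cellular chains of the $\Z^\ell$-cover, vanishing relative $C_0$, and Fox's boundary formula. You write out the path-lifting induction for that formula, which the paper only cites. For presentation independence your primary route differs from the paper's. You pass between presentations by Tietze moves. You treat the addition of a consequence relator $r$ at the module level: after applying $\Phi$, which sends every relator to $1$, the row $\Phi(\partial r/\partial x_j)_j$ is exactly a $\Lambda$-combination of rows of $A$, so the new column of $A^{\mathsf T}$ does not enlarge the image. You treat the addition of a generator with a defining relator by homotopy invariance. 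The paper instead attaches cells of dimension $\ge 3$ to $W_{\mathcal P}$ to obtain a $K(G,1)$, notes that this leaves $\partial_2$ and hence relative $H_1$ unchanged, and then uses that $K(G,1)$ is determined by $G$ up to homotopy. Your ``alternative'' paragraph is this argument. No $2$-cells ever need to be attached, and you should say explicitly what finishes it: homotopy uniqueness of $K(G,1)$ compatible with the identification of $\pi_1$ (hence with $\phi$), together with homotopy invariance of relative twisted homology. The Tietze route is the classical Fox-calculus proof and can be made entirely algebraic. The generator move (an elementary expansion, not a collapse) turns $A$ into $\begin{pmatrix} A & 0\\ * & 1\end{pmatrix}$, whose transpose has the same cokernel, so no topology is needed. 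The price is invoking Tietze's theorem in the form that realizes a prescribed isomorphism of groups. The paper's route is shorter and exhibits the module intrinsically as the relative homology of $K(G,1)$, but rests on those facts about Eilenberg--Mac Lane spaces. One small inconsistency: you first assert that every Tietze move is a homotopy equivalence, then correctly note that adding a consequence relator is not one; only the generator move is.
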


\begin{proof}
Let \(p\colon\widetilde W_{\mathcal{P}}\to W_{\mathcal{P}}\) be the regular covering corresponding
to \(\Ker(\phi)\). The standard covering-space model identifies
\(C_*(W_{\mathcal{P}},*;\Lambda)\) with
\(C_*(\widetilde W_{\mathcal{P}},p^{-1}(*);\K)\). After choosing lifts of the
\(1\)- and \(2\)-cells, the relative cellular chain groups in degrees
\(1\) and \(2\) are identified with \(\Lambda^n\) and \(\Lambda^m\),
respectively. Fox's cellular boundary formula identifies the boundary map
from degree \(2\) to degree \(1\) with the homomorphism represented by
\(A^{\mathsf T}\). Since the relative chain group in degree \(0\) is zero,
we obtain
\(H_1(W_{\mathcal{P}},*;\Lambda)\cong\Coker(A^{\mathsf T})=\mathcal A(G,\phi)\).

By successively attaching, if necessary, cells of dimensions at least
\(3\), one obtains an Eilenberg--Mac Lane space \(K(G,1)\). Since these
attachments do not change the fundamental group, the coefficient system
extends to this space. They also leave the chain groups in degrees \(1\)
and \(2\), and the boundary map from degree \(2\) to degree \(1\),
unchanged. Hence they do not change relative homology in degree \(1\),
which is therefore determined, up to \(\Lambda\)-module isomorphism, by
\((G,\phi)\).
\end{proof}

\begin{remark}\label{rem:absolute-relative}
Since \(\phi\) is surjective, \(H_0(W_{\mathcal{P}};\Lambda)\cong\K\) via the
augmentation homomorphism. The long exact sequence of the pair gives
\[
0\longrightarrow H_1(W_{\mathcal{P}};\Lambda)
 \longrightarrow \mathcal A(G,\phi)
 \longrightarrow \ker\varepsilon
 \longrightarrow 0,
\]
where \(\varepsilon\colon\Lambda\to\K\) is defined by
\(\varepsilon(t_i)=1\). If \(\ell=1\), then
\(\ker\varepsilon=(t-1)\Lambda\cong\Lambda\); hence the sequence splits
noncanonically, and
\(\mathcal A(G,\phi)\cong H_1(W_{\mathcal{P}};\Lambda)\oplus\Lambda\).
\end{remark}

\begin{example}\label{ex:l1}
Suppose that \(\ell=1\) and \(\K=\Q\). Since
\(\Lambda=\Q[t^{\pm1}]\) is a PID, \(\mathcal A(G,\phi)\) is the direct
sum of a free module and a finitely generated torsion module. The order of
the torsion module, namely the product of its invariant factors, is the
one-variable Alexander polynomial associated with \((G,\phi)\), up to a
unit of \(\Lambda\). For the canonical epimorphism of a knot group, the
absolute Alexander module \(H_1(W_{\mathcal{P}};\Lambda)\) is torsion. Hence
Remark~\ref{rem:absolute-relative} shows that the free rank of
\(\mathcal A(G,\phi)\) is one, and the torsion order above is the usual
Alexander polynomial of the knot.
\end{example}

\section{Alexander--Gr\"obner invariants}\label{sec:AG-definition}
This section introduces Fitting-ideal Gr\"obner bases and specializes
them to oriented links.  The final subsection records low-crossing
calculations and exploratory computations for certain three-component
pretzel links.

Put \(P:=\K[t_1,\dots,t_\ell]\subset\Lambda\), and fix a monomial order on
\(P\). For an ideal \(J\subset P\), let \(\mathcal G_P(J)\) denote its
reduced Gr\"obner basis with respect to this order. 
We use the standard existence and uniqueness theorem for reduced
Gr\"obner bases, the normal-form membership criterion, and the fact
that standard monomials form a basis of the quotient; see \cite[\S15]{Eis1}.

For an ideal \(I\subset\Lambda\), write \(\PP(I):=P\cap I\) for its
contraction. The following lemma shows that contraction loses no
information.

\begin{lemma}\label{lem:contraction-injective}
Contraction \(I\mapsto\PP(I)\) is injective on the set of ideals of
\(\Lambda\).
\end{lemma}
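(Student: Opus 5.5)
The plan is to show that every ideal \(I\subset\Lambda\) can be rebuilt from its contraction, namely
\[
 I=\PP(I)\,\Lambda ,
\]
i.e.\ \(I\) is the extension of \(\PP(I)\) back to \(\Lambda\). Once this holds, two ideals \(I,I'\subset\Lambda\) with \(\PP(I)=\PP(I')\) satisfy \(I=\PP(I)\Lambda=\PP(I')\Lambda=I'\), which is the claimed injectivity.

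The inclusion \(\PP(I)\Lambda\subset I\) is immediate. Indeed, \(\PP(I)=P\cap I\) is contained in \(I\), and \(I\) is an ideal of \(\Lambda\).

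For the reverse inclusion, take \(f\in I\). The key step is that every Laurent polynomial can be cleared of negative exponents by a monomial, which is a unit of \(\Lambda\). Since \(f\) involves only finitely many monomials \(t_1^{a_1}\cdots t_\ell^{a_\ell}\) with \(a_i\in\Z\), choose \(N\ge0\) larger than the absolute value of every negative exponent occurring in \(f\). Put \(u:=(t_1\cdots t_\ell)^N\). Then \(uf\in P\), and \(uf\in I\) because \(I\) is an ideal, so \(uf\in\PP(I)\). As \(u\) is invertible in \(\Lambda\), we get
\[
 f=u^{-1}(uf)\in\PP(I)\Lambda .
\]

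No step here is a genuine obstacle. The only point needing a little care is that the clearing monomial \(u\) depends on \(f\). This causes no difficulty, since membership is checked one element at a time. Conceptually, \(\Lambda\) is the localization of \(P\) at the multiplicative set of monomials, and the statement is the standard fact that for a localization \(S^{-1}P\), every ideal \(I\) of \(S^{-1}P\) satisfies \((I\cap P)S^{-1}P=I\). One could also cite this fact directly, e.g.\ from \cite{Eis1}.
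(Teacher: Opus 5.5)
Your proof is correct and takes essentially the same approach as the paper, which obtains \(I=S^{-1}(I\cap P)\) from the extension--contraction correspondence for the localization \(\Lambda=S^{-1}P\) at the monomials. Your monomial-clearing argument is just the elementwise proof of that fact.
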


\begin{proof}
Let \(S\) be the multiplicative set generated by
\(t_1,\dots,t_\ell\). Since \(\Lambda=S^{-1}P\), the
extension--contraction correspondence for localization gives
\(I=S^{-1}(I\cap P)\) for every ideal \(I\subset\Lambda\).
\end{proof}

\begin{definition}\label{def:AG}
For \(k\ge0\), define
\[
\AG_k(G,\phi)
 :=\mathcal G_P\!\left(\PP(E_k(G,\phi))\right).
\]
We call this the \emph{\(k\)th Alexander--Gr\"obner invariant} of
\((G,\phi)\). The target \(\Z^\ell\) is understood with its fixed ordered
standard basis. The reduced Gr\"obner basis of the unit ideal is \(\{1\}\),
and that of the zero ideal is the empty set. By
Theorem~\ref{thm:CF-invariance} and uniqueness of reduced Gr\"obner bases,
\(\AG_k(G,\phi)\) depends only on \((G,\phi)\).
\end{definition}

\begin{remark}\label{rem:term-order}
Throughout the paper we use the lexicographic order
\(t_1>\cdots>t_\ell\). Any other fixed monomial order also gives an
invariant.However, a different monomial order generally yields a different reduced Groebner basis, but merely a different normal form of the same contracted ideal and hence of the same Laurent ideal.
\end{remark}

Although \(\AG_k(G,\phi)\) is defined for every finitely presented group,
there is usually no canonical choice of \(\phi\). We now specialize to
oriented codimension-two links, for which the meridians provide such an
epimorphism.

\subsection{Alexander--Gr\"obner invariants for links}\label{sec:links}

Let \(d\ge1\), and let \(L=L_1\sqcup\cdots\sqcup L_\ell\subset S^{d+2}\) be an oriented
smooth spherical link, with \(L_i\cong S^d\). Fix an open regular tubular
neighborhood \(\nu(L)\), and write \(X_L:=S^{d+2}\setminus\nu(L)\).

First suppose that the components are ordered. By Alexander duality,
\(H_1(X_L;\Z)\cong\Z^\ell\), with ordered basis represented by the
positive meridians \(\mu_1,\dots,\mu_\ell\). They determine the canonical
epimorphism \(\phi\colon\pi_1(X_L)\twoheadrightarrow\Z^\ell\), defined by
\(\phi(\mu_i)=e_i\).
Let us consider the \(k\)th elementary ideal
\(E_k(L):=E_k(\pi_1(X_L),\phi)\).
In parallel, we can consider the associated Gr\"obner basis
\(\AG_k(L):=\AG_k(\pi_1(X_L),\phi)\).


For an unordered link, relabeling the components permutes the coefficient
system. We therefore retain the invariants arising from all temporary
labelings.

\begin{definition}\label{def:unordered-link-AG}
Let \(L\) be an oriented \(\ell\)-component link without a chosen ordering.
Choose a temporary labeling, with corresponding positive meridians
\(\mu_1,\dots,\mu_\ell\). For each \(\sigma\in S_\ell\), let
\(\phi_\sigma\colon\pi_1(X_L)\twoheadrightarrow\Z^\ell\) be defined by
\(\phi_\sigma(\mu_i)=e_{\sigma(i)}\). 
The \(k\)th Alexander--Gr\"obner invariant of the unordered link
is the \(S_\ell\)-indexed family
\[
\Bigl(
\AG_k(\pi_1(X_L),\phi_\sigma)
\Bigr)_{\sigma\in S_\ell},
\]
considered up to the reindexing induced by a change of the
temporary labeling. Since a
permutation of the variables need not preserve the fixed lexicographic
order, each member is reduced separately with respect to the order in
Remark~\ref{rem:term-order}.
\end{definition}
Here two oriented ordered links are called equivalent if they are
related by an orientation-preserving smooth ambient isotopy that
preserves the orientation and ordering of every component. In the
unordered case, the component ordering is not required to be preserved.
\begin{corollary}[Link invariance]\label{cor:link-invariant}
Equivalent oriented ordered links have the same Alexander--Gr\"obner
invariants in every degree. Equivalent oriented unordered links determine
the same families in Definition~\ref{def:unordered-link-AG}.
\end{corollary}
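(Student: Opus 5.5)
The plan is to reduce link invariance to the group-theoretic invariance already recorded in Definition~\ref{def:AG}: there, via Theorem~\ref{thm:CF-invariance} and uniqueness of reduced Gr\"obner bases, \(\AG_k(G,\phi)\) depends only on the pair \((G,\phi)\). This pair determines \(\AG_k\) up to isomorphism of pairs as well. If \(\psi\colon G\to G'\) is an isomorphism with \(\phi'\circ\psi=\phi\), then a presentation of \(G\) is carried by \(\psi\) to a presentation of \(G'\). The two Fox matrices then coincide entrywise in \(\Lambda\), since \(\Phi'\) composed with the induced map on free groups equals \(\Phi\). Hence \(E_k(G,\phi)=E_k(G',\phi')\) as ideals of the same ring \(\Lambda\). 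Consequently their contractions and reduced Gr\"obner bases agree, because the order of Remark~\ref{rem:term-order} is fixed.

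For the ordered statement, let \(h\colon S^{d+2}\to S^{d+2}\) be the end map of an ambient isotopy carrying \(L\) to \(L'\), preserving orientations and the ordering of components. By uniqueness of tubular neighborhoods up to isotopy, we may adjust \(h\) so that \(h(\nu(L))=\nu(L')\). Then \(h\) restricts to a homeomorphism \(X_L\to X_{L'}\), inducing \(\psi\colon\pi_1(X_L)\to\pi_1(X_{L'})\) after a choice of basepoint path; a different path changes \(\psi\) only by an inner automorphism, which is irrelevant since \(\phi'\) is abelian. The key check is that \(\psi\) intertwines the meridian maps, \(\phi'\circ\psi=\phi\). Since \(h\) preserves the orientation of \(S^{d+2}\) and of \(L_i\), it carries a positive meridian \(\mu_i\) of \(L_i\) to a positive meridian of \(L'_i\), with the same index \(i\) by order preservation. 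Thus \(\phi'(\psi(\mu_i))=e_i=\phi(\mu_i)\). Because the meridians generate \(H_1(X_L;\Z)\) by Alexander duality, this gives \(\phi'\circ\psi=\phi\). Applying the first paragraph yields \(\AG_k(L)=\AG_k(L')\) for every \(k\).

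For the unordered statement, the isotopy \(h\) may permute components, say \(h(L_i)=L'_{\tau(i)}\) with respect to the chosen temporary labelings. The same argument gives \(\phi'_{\sigma}\circ\psi=\phi_{\sigma\circ\tau^{-1}}\) up to the labeling conventions. Hence the family for \(L\) is the family for \(L'\) reindexed by \(\sigma\mapsto\sigma\tau^{\pm1}\). Changing a temporary labeling induces exactly such a reindexing, so the two families agree up to the equivalence built into Definition~\ref{def:unordered-link-AG}.

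The only point requiring care is the orientation bookkeeping showing that positive meridians go to positive meridians. The sign of a meridian is fixed by the linking number with \(L_i\) being \(+1\), and linking number is preserved by orientation-preserving diffeomorphisms that preserve the component orientations. Everything else is formal.
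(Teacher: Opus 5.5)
Your proposal is correct and follows the same route as the paper: an equivalence induces an isomorphism \(\psi\) of link groups sending each positive meridian to a conjugate of the corresponding positive meridian, so \(\phi'\circ\psi=\phi\) (or, in the unordered case, the family \((\phi_\sigma)\) is merely reindexed). You also spell out the step the paper leaves as ``follows from the construction'', namely that transporting a presentation along \(\psi\) makes the Fox matrices literally equal; the only slip is cosmetic, since with your convention \(h(L_i)=L'_{\tau(i)}\) one gets exactly \(\phi'_\sigma\circ\psi=\phi_{\sigma\circ\tau}\) and no hedging ``up to labeling conventions'' is needed.
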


\begin{proof}
An equivalence induces an isomorphism of link groups carrying each positive
meridian to a conjugate of the corresponding positive meridian. In the
ordered case, it therefore induces an isomorphism of the pairs consisting
of the link group and its canonical coefficient system. In the unordered
case, it carries the collection of coefficient systems in
Definition~\ref{def:unordered-link-AG} to the corresponding collection and
merely reindexes the family. The assertions follow from the construction
above.
\end{proof}

\begin{example}\label{ex:k1}
Assume that \(d=1\) and \(\ell>1\). Let \(\Delta_L\) be the multivariable
Alexander polynomial, viewed in \(\Lambda\) through the natural map
\(\Z\to\K\). With the present convention for the relative Alexander
module, Torres' formula gives
\[
E_1(L)
 =\ideal{
   (t_1-1)\Delta_L,\dots,(t_\ell-1)\Delta_L
  }
 \subset\Lambda;
\]
see \cite[Theorem in \S3]{Torres}. Thus \(\Delta_L\) determines
\(E_1(L)\). Conversely, \(\Lambda\) is a UFD, so the greatest common
divisor of a finitely generated ideal is well defined up to a unit. Since
\(\ell>1\), the polynomials \(t_1-1\) and \(t_2-1\) are coprime. Thus, if
\(\Delta_L\ne0\), the greatest common divisor of \(E_1(L)\) is
\(\Delta_L\), up to a unit of \(\Lambda\); if \(\Delta_L=0\), then
\(E_1(L)=0\). Lemma~\ref{lem:contraction-injective} therefore shows that
\(\AG_1(L)\) and the image of \(\Delta_L\) in \(\Lambda\) contain the same
information, up to the usual units. 
Here, we should notice that the units are 
\(\K[t_1^{\pm1},\dots,t_\ell^{\pm1}]^\times
=\{c t_1^{a_1}\cdots t_\ell^{a_\ell}\mid c\in\K^\times,\ a_i\in\Z\}\).
We therefore focus mainly on
\(\AG_k\) for \(k>1\).
\end{example}

\subsection{Low-crossing calculations for classical links}
\label{subsec:low-crossing-examples}

Throughout this subsection, \(d=1\), \(\K=\Q\), and every link is regarded
as oriented and ordered. For each label from \emph{The Link Atlas} \cite{KnotInfo} and
\emph{KnotAtlas} \cite{KnotAtlas} appearing below, we choose an oriented planar diagram and an
ordering of its components; all reported values refer to these choices. We use
the canonical coefficient system described above, so
that \(\Phi(\mu_i)=t_i\), and the lexicographic order fixed in
Remark~\ref{rem:term-order}.

The computations were carried out by custom Python scripts using exact
rational arithmetic. Starting from Wirtinger presentations of the chosen
diagrams, we formed the Fox matrices, contracted the resulting elementary
ideals to the polynomial ring, and computed their reduced Gr\"obner bases.
Each displayed basis was checked directly against the corresponding contracted
ideal. Some elements are written in factored form for readability.
The detalied programs are put in the Home pages of the first author \cite{NHP}.

\begin{example}[Low-crossing links]\label{ex:k2}
\noindent\emph{Two components.}
For each chosen representative of a prime \(2\)-component link type in
\emph{The Link Atlas} with at most \(8\) crossings, one obtains
\(\AG_2(L)=\{1\}\). Among the chosen representatives with \(9\) crossings,
the only ones for which \(\AG_2(L)\ne\{1\}\) are
\begin{align*}
\AG_2(\mathrm{L9a32})
  &=\{t_1+1,\ t_2^2-3t_2+1\},\\
\AG_2(\mathrm{L9a33})
  &=\{t_1+t_2-1,\ t_2^2-t_2+1\},\\
\AG_2(\mathrm{L9a41})
 =\AG_2(\mathrm{L9n13})
 =\AG_2(\mathrm{L9n18})
  &=\{t_1-t_2,\ t_2^2-t_2+1\},\\
\AG_2(\mathrm{L9n19})
  &=\{t_1-t_2^3,\ t_2^4-t_2^3+t_2^2-t_2+1\}.
\end{align*}

\smallskip
\noindent\emph{Three components.}
For each chosen representative of a \(3\)-component link type with \(7\) or
\(8\) crossings, the computations give an \(\AG_2(L)\) that is neither the
empty set nor \(\{1\}\), and they give \(\AG_3(L)=\{1\}\). For many of these
representatives,
\(\AG_2(L)=\{t_1-1,t_2-1,t_3-1\}\). The exceptional values occurring among
the \(8\)-crossing representatives are listed in
Table~\ref{tab:AG2_8cross_3comp_filtered}.

\smallskip
\noindent\emph{Four components.}
For each chosen representative of a \(4\)-component link type with \(7\) or
\(8\) crossings, \(\AG_2(L)\) is again neither the empty set nor \(\{1\}\).
The corresponding bases are considerably longer and are omitted here.
\end{example}

\begin{table}[htbp]
  \centering
  \footnotesize
  \setlength{\tabcolsep}{6pt}
  \renewcommand{\arraystretch}{1.15}
  \begin{tabular}{@{}l p{0.78\linewidth}@{}}
    \toprule
    Link & \(\AG_2\) (reduced Gr\"obner basis)\\
    \midrule
    L8a16 &
    \(\{t_2-1,\ t_3-1\}\)
    \\[2pt]

    L8a19 &
    \(\{(t_1-1)(t_3^2-t_3+1),\ t_2-t_3,
       (t_3-1)(t_3^2-t_3+1)\}\)
    \\[2pt]

    L8a20 &
    \(\{(t_1-t_3)(t_1+t_3),\ (t_1-1)(t_2-1),
       (t_1+t_3)(t_3-1),\ (t_2-1)(t_2+1),
       (t_2-1)(t_3-1)\}\)
    \\[2pt]

    L8n5 &
    \(\{(t_1-1)(t_2-1),\ (t_1-1)(t_3-1),
       (t_2-1)(t_2+1),\ (t_2-1)(t_3-1),
       (t_3-1)(t_3+1)\}\)
    \\[2pt]

    L8n6 &
    \(\{(t_1-1)(t_1t_3+1),\ (t_1-1)(t_2-1),
       (t_3-1)(t_1t_3+1),\ (t_2-1)(t_2+1),
       (t_2-1)(t_3-1)\}\)
    \\
    \bottomrule
  \end{tabular}
  \caption{Exceptional values of \(\AG_2\) among the chosen
  \(8\)-crossing \(3\)-component representatives. Each value differs from
  both \(\{1\}\) and \(\{t_1-1,t_2-1,t_3-1\}\).}
  \label{tab:AG2_8cross_3comp_filtered}
\end{table}

Within the ranges examined, every chosen \(3\)- and \(4\)-component
representative has a proper, nonzero second elementary ideal, whereas all
chosen \(2\)-component representatives with at most \(8\) crossings have
\(E_2(L)=\Lambda\). This limited sample suggests that higher elementary
ideals become more varied as the number of components grows and motivates
the study of such ideals as potential refinements of the classical
multivariable Alexander polynomial.

We next record a family suggested by computer experiments.

\begin{example}[Computational experiment for \(3\)-component pretzel links]
\label{ex:3pretzel-experiment}
The following statements are based on computer calculations and are included
only as experimental data; no proof is given here. We use a fixed orientation
of the standard three-strand pretzel diagram and order its three components
from left to right. Thus each \(P(2r,2m,2n)\) is regarded as an oriented
ordered \(3\)-component link.

For every pair \(1\le m,n\le9\) tested, the computation gives
\[
\AG_2\bigl(P(2,2m,2n)\bigr)
 =\{t_1-1,\ t_2-1,\ t_3^{\gcdop(m,n)}-1\}.
\]
Thus, within this range, the subfamily with first twist coefficient \(2\) has
a particularly simple form.

Other small parameter values already produce substantially longer bases. For
example,
\[
\begin{aligned}
\AG_2\bigl(P(4,4,6)\bigr)
=\{\,&t_1-t_3,\ t_2^2-t_3^4,\\
    &t_2t_3-t_2-t_3^4+t_3^3-t_3^2+t_3,\\
    &t_3^5-t_3^4+t_3^3-t_3^2+t_3-1\,\}.
\end{aligned}
\]
The cardinality of the reduced basis also varies in this sample: it is \(4\)
for \(P(4,4,6)\), \(7\) for \(P(4,6,12)\), \(8\) for \(P(6,6,6)\), and
\(9\) for \(P(10,10,10)\). Thus even for small parameters, neither the size
nor the shape of the basis is uniform, in contrast with the more structured
\(2\)-component family treated in
Subsection~\ref{subsec:pretzel-two}.

For the corresponding unordered links, one takes the family obtained from
all reorderings of the components, as prescribed in
Definition~\ref{def:unordered-link-AG}.
\end{example}

These computations illustrate both the range of behavior and the
computational complexity of the invariant. They motivate the infinite
families with explicit formulas studied in Section~\ref{sec:families}.

%

\begin{example}[The same Alexander polynomial but different \(\AG_2\) values]
\label{ex:AG-refines-Alexander}
Extending the same computation to the chosen two- and three-component
representatives with at most \(10\) crossings gives links whose
multivariable Alexander polynomials agree, although their second
Alexander--Gr\"obner invariants do not. We record three representative
pairs. All statements below use the orientations, component orderings,
coefficient field, and monomial order fixed at the beginning of this
subsection.

\begin{enumerate}[(i)]
\item Let \(B=\mathrm{L6a4}\) and \(N=\mathrm{L8n5}\),
where \(B\) denotes the Borromean rings. Their multivariable Alexander
polynomials satisfy
\[
 \Delta_B\doteq\Delta_N
 \doteq (t_1-1)(t_2-1)(t_3-1),
\]
where \(\doteq\) denotes equality up to the usual Laurent units. On the
other hand,
\[
 \AG_2(B)
 =\bigl\{
 (t_1-1)(t_2-1),\
 (t_1-1)(t_3-1),\
 (t_2-1)(t_3-1)
 \bigr\},
\]

\begin{align*}
 \AG_2(N)
 =\bigl\{&
 (t_1-1)(t_2-1),\
 (t_1-1)(t_3-1),\
 (t_2-1)(t_2+1),\\
 & (t_2-1)(t_3-1),\
 (t_3-1)(t_3+1)
 \bigr\}.
\end{align*}
Thus \(\AG_2(B)\ne\AG_2(N)\).

\item The two-component links \(\mathrm{L9a4}\) and \(\mathrm{L10n35}\)
have the same multivariable Alexander polynomial; an integral representative
is \(2(t_1-1)(t_2-1)(t_2^2-t_2+1)\).  Nevertheless,
\[
 \AG_2(\mathrm{L9a4})=\{1\},
 \qquad
 \AG_2(\mathrm{L10n35})
 =\{t_1-1,\ t_2^2-t_2+1\}.
\]
Hence, in this pair, the second elementary ideal is the unit ideal for one
link and a proper ideal for the other.

\item The two-component links \(\mathrm{L10n56}\) and \(\mathrm{L10n57}\)
have the common multivariable Alexander polynomial
\[
 \Delta_{\mathrm{L10n56}}\doteq\Delta_{\mathrm{L10n57}}
 \doteq (t_1-1)(t_1+1)^2(t_2-1),
\]

\[
 \AG_2(\mathrm{L10n56})
 =\left\{t_1+1,\ t_2^2-\frac32t_2+1\right\}, \qquad 
 \AG_2(\mathrm{L10n57})
 =\left\{t_1+1,\ t_2^2-\frac52t_2+1\right\}.
\]
Thus the invariant can also distinguish two links for which both second
Alexander--Gr\"obner invariants are nontrivial.
\end{enumerate}

These examples show directly that \(\AG_2(L)\) is not determined by the
multivariable Alexander polynomial. The three pairs were chosen to display,
respectively, a three-component example, the distinction between a unit and
a proper second elementary ideal, and a distinction between two proper
second elementary ideals.
\end{example}

\begin{remark}[Further low-crossing pairs]
\label{rem:further-Alexander-collisions}
In the same data set, fourteen further pairs have multivariable Alexander
polynomials agreeing up to the usual units but different second
Alexander--Gr\"obner invariants:
\begin{itemize}
\item
\((\mathrm{L6a4},\mathrm{L10n70})\), together with
\((\mathrm{L9n25},L)\) for
\(L\in\{\mathrm{L8n5},\mathrm{L10n70}\}\);

\item
\((\mathrm{L8a16},\mathrm{L10n79})\),
\((\mathrm{L9n28},\mathrm{L10n83})\),
\((\mathrm{L9a10},\mathrm{L10n35})\), and
\((\mathrm{L9a14},\mathrm{L10n39})\);

\item
\((L,\mathrm{L10a32})\) for
\(L\in\{\mathrm{L10a3},\mathrm{L10a7},
        \mathrm{L10a34},\mathrm{L10a36}\}\);

\item
\((L,\mathrm{L10a39})\) for
\(L\in\{\mathrm{L10a28},\mathrm{L10a29}\}\);

\item
\((\mathrm{L10n32},\mathrm{L10n36})\).
\end{itemize}
For the last pair the common Alexander polynomial is zero; for all the other
pairs in this remark it is nonzero. Together with the three pairs in
Example~\ref{ex:AG-refines-Alexander}, this gives seventeen pairs among
the chosen two- and three-component representatives with at most \(10\)
crossings. Since the purpose here is only to exhibit information not
detected by the Alexander polynomial, we omit the Fox matrices and the full
reduced Gr\"obner bases for these additional pairs; those data are more
naturally placed in an appendix or a supplementary table.
\end{remark}

\section{Explicit infinite families}\label{sec:families}

We now turn to two infinite families for which the Alexander--Gr\"obner invariants admit explicit formulas.
These families are illustrated in Figure~\ref{fig:torus-pretzel}.

\begin{figure}[htbp]
\centering
\begin{picture}(100,125)
\IfFileExists{pretzelFigure3.pdf}{%
  \put(94,54){\pcc{pretzelFigure3}{0.5}}%
}{\put(110,63){\fbox{\texttt{Missing figure: 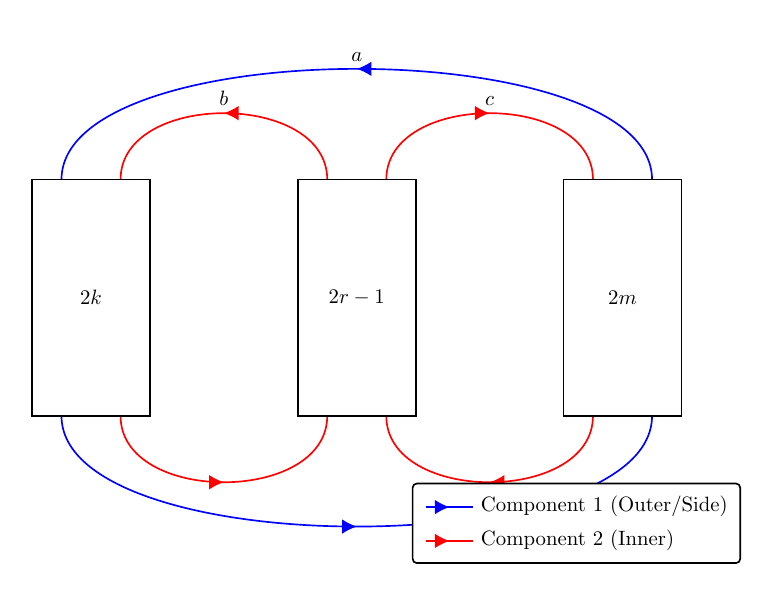}}}}
\IfFileExists{mutorus4.eps}{%
  \put(-85,55){\pc{mutorus4}{0.31156688125}}%
}{\put(-70,62){\fbox{\texttt{Missing figure: 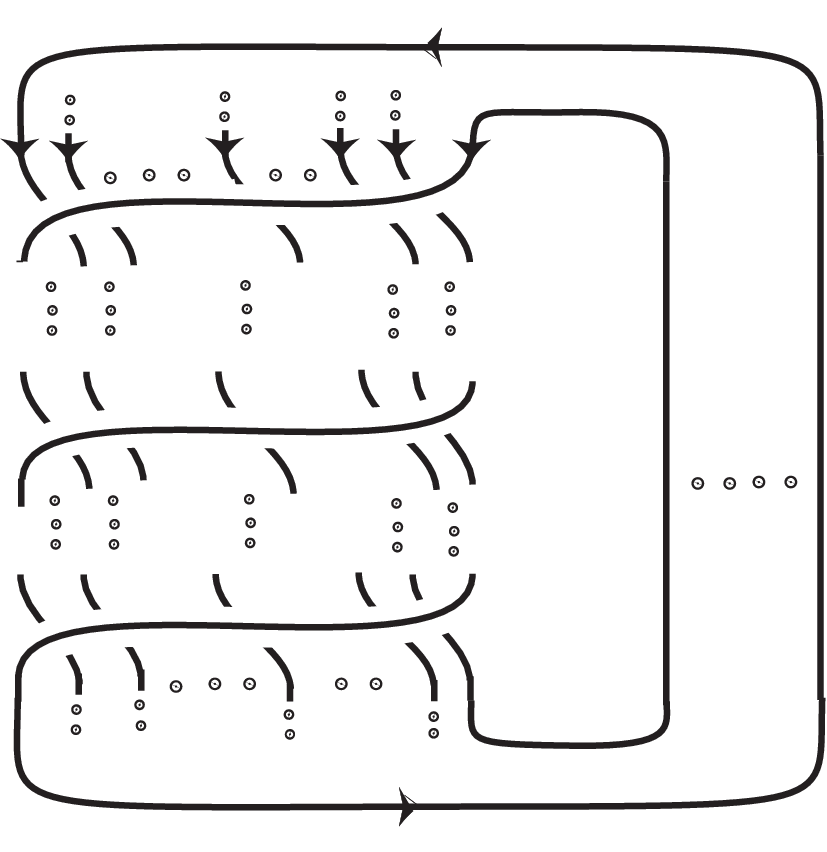}}}}
\end{picture}
\caption{Left: the torus link $T(\ell n,\ell m)$, which has $\ell$ components. Right: the three-strand pretzel link $P(2k,2r-1,2m)$, which has two components.}
\label{fig:torus-pretzel}
\end{figure}

\subsection{Torus links $T(\ell n,\ell m)$}\label{subsec:torus-links}

Fix an integer \(\ell>1\) and coprime positive integers \(m,n\), and let
\(L:=T(\ell n,\ell m)\subset S^3\) be the oriented ordered
\(\ell\)-component torus link shown on the left in
Figure~\ref{fig:torus-pretzel}.  Let \(\mu_i\) be the positive meridian of
its \(i\)th component, and use the canonical epimorphism
\(\phi\colon\pi_1(X_L)\twoheadrightarrow\Z^\ell\) given by
\(\phi(\mu_i)=e_i\).  Put
\[
\begin{gathered}
 P:=\Q[t_1,\dots,t_\ell],\qquad
 \Lambda:=\Q[t_1^{\pm1},\dots,t_\ell^{\pm1}],\qquad
 U:=t_1\cdots t_\ell,\\
 F_i:=t_1\cdots t_i\quad(1\le i<\ell),\qquad
 \mathfrak a:=(t_1-1,\dots,t_\ell-1)\subset P,
\end{gathered}
\]
and
\[
 D(U):=U^{mn}-1,\qquad
 A(U):=\frac{D(U)}{U^m-1},\qquad
 B(U):=\frac{D(U)}{U^n-1}.
\]
Set \(\Theta(U):=A(U)B(U)\).

By \cite{AK}, the link group admits the deficiency-one presentation
\footnote{The authors \cite{AK} give a presentation containing a generator \(f_0\)
and the relation \(f_0=1\). Eliminating \(f_0\) yields the following
deficiency-one presentation.}
\[
 \pi_1(X_L)\cong
 \Bigl\langle \alpha,\beta,f_1,\dots,f_{\ell-1}
 \ \Bigm|\
 \alpha^n\beta^{-m},\
 \alpha^n f_i \beta^{-m} f_i^{-1}\ (1\le i<\ell)
 \Bigr\rangle.
\]
Then, the abelianization is given by
 \[ [\alpha]=m(e_1+ \cdots +e_{\ell}), \quad 
[\beta]=n(e_1+ \cdots +e_{\ell}), \quad 
[f_i]= e_1+ \cdots +e_{i}.
\]
Thus, the induced group-ring map satisfies
\[
\Phi(\alpha)=U^m,\qquad \Phi(\beta)=U^n,\qquad \Phi(f_i)=F_i.
\]
After Fox differentiation and subtraction of the first row from every
other row, the Fox matrix is row-equivalent over \(P\) to
\[
 M_\ell=
 \begin{pmatrix}
 A(U) & -B(U) & 0 & 0 & \cdots & 0\\
 0 & -(F_1-1)B(U) & D(U) & 0 & \cdots & 0\\
 0 & -(F_2-1)B(U) & 0 & D(U) & \cdots & 0\\
 \vdots & \vdots & \vdots & \vdots & \ddots & \vdots\\
 0 & -(F_{\ell-1}-1)B(U) & 0 & 0 & \cdots & D(U)
 \end{pmatrix}
 \in\Mat_{\ell\times(\ell+1)}(P).
\]

\begin{lemma}\label{lem:toruslink-saturation}
If \(h(U)\in\Q[U]\) satisfies \(h(0)\ne0\), then
\[
 (h(U)\Lambda)\cap P=(h(U)),\qquad
 (h(U)\mathfrak a\Lambda)\cap P=h(U)\mathfrak a.
\]
\end{lemma}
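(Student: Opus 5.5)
The plan is to reduce both equalities to two facts: each variable \(t_i\) is a prime element of the UFD \(P\) that does not divide \(h(U)\), and each \(t_i\) is a unit modulo the maximal ideal \(\mathfrak a\). Throughout I use the description from the proof of Lemma~\ref{lem:contraction-injective}: \(\Lambda=S^{-1}P\), where \(S\) is the multiplicative set generated by \(t_1,\dots,t_\ell\). Hence an element of \(P\) lies in \(J\Lambda\) exactly when \(t^\alpha f\in J\) for some monomial \(t^\alpha\in S\).

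\emph{First equality.} The inclusion \((h(U))\subset(h(U)\Lambda)\cap P\) is clear. Conversely, let \(f\in P\) with \(f=h(U)g\) and \(g\in\Lambda\). Clearing denominators, I write \(g=g'/t^\alpha\) with \(g'\in P\), so that \(t^\alpha f=h(U)g'\) holds in \(P\). The key step is that \(t_i\nmid h(U)\) in \(P\) for every \(i\). Substituting \(t_i=0\) sends \(U=t_1\cdots t_\ell\) to \(0\), so \(h(U)\) specializes to the nonzero constant \(h(0)\). Since \(t_i\) is prime in the UFD \(P\) and divides \(h(U)g'\) to order at least \(\alpha_i\), it follows that \(t_i^{\alpha_i}\mid g'\). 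Hence \(t^\alpha\mid g'\), so \(g\in P\) and \(f\in(h(U))\).

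\emph{Second equality.} Again \(h(U)\mathfrak a\subset(h(U)\mathfrak a\Lambda)\cap P\) is immediate. Conversely, take \(f\in P\cap h(U)\mathfrak a\Lambda\).
\begin{enumerate}[(i)]
\item Since \(f\in h(U)\Lambda\cap P\), the first part gives \(f=h(U)g\) with \(g\in P\).
\item Also \(f=h(U)a'\) with \(a'\in\mathfrak a\Lambda\). Cancelling \(h(U)\ne0\) in the domain \(\Lambda\) gives \(g=a'\), so \(g\in\mathfrak a\Lambda\cap P\).
\item It remains to show \(\mathfrak a\Lambda\cap P=\mathfrak a\). If \(g\in P\) satisfies \(t^\alpha g\in\mathfrak a\) for some monomial, then, because \(t_i\equiv1\pmod{\mathfrak a}\), we have \(t^\alpha\equiv1\pmod{\mathfrak a}\), hence \(g\in\mathfrak a\). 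Equivalently: \(\mathfrak a\) is prime and \(S\cap\mathfrak a=\emptyset\), so \(\mathfrak a\) is contracted from \(S^{-1}P\).
\end{enumerate}
Thus \(f=h(U)g\in h(U)\mathfrak a\).

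The only step needing care is the non-divisibility \(t_i\nmid h(U)\). This is exactly where the hypothesis \(h(0)\ne0\) enters: without it, a power of \(U\) could divide \(h(U)\), and then \((h(U)\Lambda)\cap P\) would be strictly larger than \((h(U))\). The specialization \(t_i\mapsto0\) settles this cleanly, and everything else is the standard localization correspondence.
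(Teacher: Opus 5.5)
Your proof is correct and follows essentially the same route as the paper: both reduce to saturation with respect to the variables, using \(h(U)\equiv h(0)\pmod{t_i}\) (so \(t_i\) does not divide \(h(U)\)) for the first ideal, and then cancelling \(h(U)\) and using \(t_i\equiv1\pmod{\mathfrak a}\) for the second. The difference is cosmetic: the paper states this as the colon identities \((h(U)):t_i=(h(U))\) and \((h(U)\mathfrak a):t_i=h(U)\mathfrak a\), derived from the comaximality \((h(U),t_i)=P\), whereas you treat a whole monomial \(t^\alpha\) at once via primality of \(t_i\) in the UFD \(P\).
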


\begin{proof}
It is enough to show that the two ideals on the right are saturated with
respect to every variable \(t_i\).  Since
\(h(U)\equiv h(0)\pmod{t_i}\), one has
\((h(U),t_i)=P\), and hence \((h(U)):t_i=(h(U))\).  If
\(t_i f\in h(U)\mathfrak a\), write \(f=h(U)g\) by the preceding colon
identity.  Cancelling \(h(U)\) gives \(t_i g\in\mathfrak a\); since
\(t_i\equiv1\pmod{\mathfrak a}\), this implies \(g\in\mathfrak a\).
Thus \((h(U)\mathfrak a):t_i=h(U)\mathfrak a\), and localization at the
variables gives both contraction formulas.
\end{proof}

\begin{proposition}[Elementary ideals of torus links]
\label{prop:toruslink-Ek}
Define the monic polynomials
\[
\begin{aligned}
 P_k(U)&:=D(U)^{\ell-1-k}\Theta(U) &&(1\le k<\ell),\\
 P_\ell(U)&:=\gcdop\bigl(A(U),B(U)\bigr)
 =\frac{D(U)(U-1)}{(U^m-1)(U^n-1)}.
\end{aligned}
\]
Then
\[
 E_k(L)=P_k(U)\mathfrak a\Lambda,
 \qquad
 \PP(E_k(L))=P_k(U)\mathfrak a
 \qquad(1\le k<\ell),
\]
whereas
\[
 E_\ell(L)=\bigl(P_\ell(U)\bigr)\subset\Lambda,
 \qquad
 \PP(E_\ell(L))=\bigl(P_\ell(U)\bigr)\subset P.
\]
\end{proposition}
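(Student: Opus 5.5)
Since row operations over \(P\subset\Lambda\) that are invertible over \(\Lambda\) (subtracting the first row from the others) do not change determinantal ideals, I will compute \(E_k(L)=I_{\ell+1-k}(M_\ell)\) directly from the matrix \(M_\ell\), which has \(\ell+1\) columns. Put \(s:=\ell+1-k\), so \(1\le s\le\ell\). The key structural fact is that \(M_\ell\) is almost diagonal. Row \(1\) has entries only in columns \(1,2\). Row \(i+1\) has entries only in column \(2\) (namely \(-(F_i-1)B\)) and in column \(i+2\) (namely \(D\)). Hence in any \(s\times s\) minor, every column other than column \(2\) has at most one nonzero entry. A Laplace expansion along column \(2\) (if it is chosen) shows that every nonzero \(s\times s\) minor is, up to sign, a monomial of one of the following forms:
\[
D^{s},\quad AD^{s-1},\quad BD^{s-1},\quad (F_i-1)BD^{s-1},\quad AB(F_i-1)D^{s-2}.
\]
The last type arises from rows \(1\) and \(i+1\), columns \(1,2\), plus \(s-2\) further matching pairs (row \(j+1\), column \(j+2\)); it needs \(s\ge2\). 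I will also record which of these actually occur, by exhibiting explicit row and column choices: \(AD^{s-1}\) needs \(s-1\le\ell-1\) further pairs, and \(AB(F_i-1)D^{s-2}\) needs \(s-2\le\ell-2\). Both conditions are automatic.

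Next I will simplify this ideal using \(D=A(U^m-1)=B(U^n-1)\). For \(s\ge2\) this gives
\[
AD^{s-1}=ABD^{s-2}(U^n-1),\qquad BD^{s-1}=ABD^{s-2}(U^m-1).
\]
The remaining generators \(D^s\) and \((F_i-1)BD^{s-1}\) are multiples of these. So for \(1\le k<\ell\),
\[
E_k(L)=D^{s-2}\Theta\cdot\bigl(U^n-1,\ U^m-1,\ F_1-1,\dots,F_{\ell-1}-1\bigr)\Lambda .
\]
Since \(\gcdop(m,n)=1\), Bezout in \(\Q[U]\) gives \((U^n-1,U^m-1)=(U-1)\). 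Moreover \((U-1,F_1-1,\dots,F_{\ell-1}-1)\Lambda=\mathfrak a\Lambda\): one inclusion is clear, and conversely \(t_i-1=F_{i-1}^{-1}\bigl((F_i-1)-(F_{i-1}-1)\bigr)\) with \(F_0=1\) and \(F_\ell=U\). Because \(D^{s-2}\Theta=D^{\ell-1-k}\Theta=P_k\), this proves \(E_k(L)=P_k\mathfrak a\Lambda\). For \(s=1\) (\(k=\ell\)) the ideal is generated by the entries \(A,B,D,(F_i-1)B\). Since \(D\in(B)\), it equals \((A,B)=(\gcdop(A,B))\) by Bezout in \(\Q[U]\). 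The closed form \(D(U-1)/((U^m-1)(U^n-1))\) follows from the cyclotomic factorization.

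Finally, the contraction statements follow from Lemma~\ref{lem:toruslink-saturation} with \(h=P_k\). Each \(P_k\) is a product of quotients of polynomials \(U^a-1\), so \(P_k(0)=\pm1\ne0\).

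The main obstacle is the first step: making the enumeration of minors rigorous and complete. That means checking that no other shapes of minors occur, for instance when column \(2\) is chosen together with a row whose diagonal column is omitted. It also means checking that the sign bookkeeping is harmless. I would handle this by induction on \(s\), expanding along column \(2\) when it is present and otherwise observing that the submatrix is a generalized permutation matrix.
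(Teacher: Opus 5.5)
Your proposal is correct and is essentially the paper's argument: both compute the determinantal ideals of the nearly diagonal matrix $M_\ell$ by Laplace expansion, simplify via $D=(U^m-1)A=(U^n-1)B$, B\'ezout, and $(U-1,F_1-1,\dots,F_{\ell-1}-1)=\mathfrak a$, and then contract using Lemma~\ref{lem:toruslink-saturation}. The only difference is bookkeeping---the paper computes $I_2(M_\ell)$ and gets $I_r(M_\ell)=D^{r-2}I_2(M_\ell)$ by expanding along the $D$-columns and enlarging first-row $2\times2$ minors by diagonal pairs, whereas you list all $s\times s$ minors at once by expanding along column~$2$, where at most one cofactor survives, so your five shapes are complete (you also need $BD^{s-1}$ to occur as a minor, which holds by the same count as for $AD^{s-1}$).
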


\begin{proof}
Write \(A=A(U)\), \(B=B(U)\), and \(D=D(U)\), and put
\(J_r:=I_r(M_\ell)\).  Since row operations preserve determinantal ideals and the presentation has
\(\ell+1\) generators,
\(E_k(L)=J_{\ell+1-k}\Lambda\) for \(1\le k\le\ell\).

The entries of \(M_\ell\) give
\(J_1=(A,B,D)=(A,B)\), because
\(D=(U^m-1)A=(U^n-1)B\).  Since
\(\gcdop(U^m-1,U^n-1)=U-1\), B\'ezout's identity in \(\Q[U]\) yields
\[
 J_1=\bigl(\gcdop(A,B)\bigr)
 =\left(\frac{D\,(U-1)}{(U^m-1)(U^n-1)}\right)
 =\bigl(P_\ell(U)\bigr).
\]

The nonzero \(2\times2\) minors involving the first row generate
\[
 \bigl(\Theta(U)(F_1-1),\dots,\Theta(U)(F_{\ell-1}-1),AD,BD\bigr).
\]
Every remaining nonzero \(2\times2\) minor is of the form
\(\pm BD(F_i-1)\) or \(\pm D^2\), and hence lies in this ideal.
Using
\(AD=\Theta(U)(U^n-1)\) and \(BD=\Theta(U)(U^m-1)\), we obtain
\[
 J_2=\Theta(U)\bigl(F_1-1,\dots,F_{\ell-1}-1,U^m-1,U^n-1\bigr)
     =\Theta(U)\mathfrak a.
\]
Here the last equality follows from
\((U^m-1,U^n-1)=(U-1)\) and
\[
\begin{aligned}
 F_1-1&=t_1-1,\qquad
 t_i-1=(F_i-1)-t_i(F_{i-1}-1) &&(2\le i<\ell),\\
 t_\ell-1&=(U-1)-t_\ell(F_{\ell-1}-1).
\end{aligned}
\]

For \(2\le r\le\ell\), every nonzero \(r\times r\) minor uses at least
\(r-2\) of the last \(\ell-1\) columns.  Each such column has the single
nonzero entry \(D\), so Laplace expansion gives
\(J_r\subset D^{r-2}J_2\).  Conversely, each generator in the displayed
first-row list for \(J_2\) can be enlarged by \(r-2\) unused diagonal
row--column pairs.  Hence
\[
 J_r=D^{r-2}J_2=D^{r-2}\Theta(U)\mathfrak a
 \qquad(2\le r\le\ell).
\]
It follows that the asserted formulas hold after extension to
\(\Lambda\).  Finally,
\(P_k(0)=(-1)^{\ell-1-k}\ne0\) for \(k<\ell\), while
\(P_\ell(0)=1\).  Lemma~\ref{lem:toruslink-saturation} gives the stated
contractions to \(P\).
\end{proof}

\begin{corollary}[Reduced Gr\"obner bases]
\label{cor:toruslink-groebner}
With respect to the lexicographic order \(t_1>\cdots>t_\ell\),
\[
 \AG_k(L)=
 \begin{cases}
  \bigl\{P_k(U)(t_i-1)\mid 1\le i\le\ell\bigr\},&1\le k<\ell,\\
  \{P_\ell(U)\},&k=\ell.
 \end{cases}
\]
\end{corollary}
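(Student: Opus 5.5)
The plan is to read off the reduced Gr\"obner bases directly from the contracted ideals computed in Proposition~\ref{prop:toruslink-Ek}. That proposition gives \(\PP(E_k(L))=P_k(U)\mathfrak a\) for \(1\le k<\ell\) and \(\PP(E_\ell(L))=(P_\ell(U))\) in \(P\). So it suffices to show two things: that the displayed sets generate these ideals, and that they are reduced Gr\"obner bases for lex \(t_1>\cdots>t_\ell\). Uniqueness of reduced Gr\"obner bases then identifies them with \(\AG_k(L)\).

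\emph{Case \(k=\ell\).} The ideal \((P_\ell(U))\) is principal and \(P_\ell\) is a nonzero monic polynomial in \(U\). Any nonzero principal ideal \((h)\) has \(\{h/\mathrm{lc}(h)\}\) as its reduced Gr\"obner basis, since every leading monomial of a multiple \(fh\) is \(\LM(f)\LM(h)\). So \(\AG_\ell(L)=\{P_\ell(U)\}\).

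\emph{Case \(k<\ell\).} Write \(h=P_k(U)\), which is monic of some degree \(d\) in \(U\). Under lex, the monomials \(U^j=(t_1\cdots t_\ell)^j\) are totally ordered by \(j\), so \(\LM(h)=U^d\) with coefficient \(1\).
\begin{enumerate}[(1)]
\item The set \(\{t_i-1\}\) is a Gr\"obner basis of \(\mathfrak a\). Its leading monomials \(t_i\) are pairwise coprime, so Buchberger's product criterion applies; equally, \(P/\mathfrak a\cong\Q\) is spanned by the single standard monomial \(1\).
\item Multiplying a Gr\"obner basis by a nonzero \(h\) gives a Gr\"obner basis of \(h\cdot(\text{ideal})\). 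Indeed \(\LM(hf)=\LM(h)\LM(f)\). Every element of \(h\mathfrak a\) has the form \(hf\) with \(f\in\mathfrak a\), so \(\LM(f)\) is divisible by some \(t_i\), and hence \(\LM(hf)\) is divisible by \(\LM(h(t_i-1))\). Alternatively, one checks \(S(hf,hg)=h\,S(f,g)\) and lifts the standard representations.
\end{enumerate}
Thus \(\{h(t_i-1)\}_{1\le i\le\ell}\) is a Gr\"obner basis of \(P_k(U)\mathfrak a\).

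\emph{Reducedness.} Each element \(h(t_i-1)\) is monic with leading monomial \(U^dt_i\). Its other monomials are of the form \(U^jt_i\) with \(j<d\), or \(U^j\) with \(j\le d\). Neither kind is divisible by any \(U^dt_r\): the first has \(t_1\)-degree \(j<d\), and the second has total \(t_r\)-degree at most \(d<d+1\). The leading monomials \(U^dt_i\) are pairwise non-divisible, and no generator is redundant. So the basis is reduced.

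The only step requiring any care is confirming that \(\LM(P_k(U))\) is the pure power \(U^d\). Without that, the leading terms \(U^dt_i\) and the non-divisibility check above would fail. This holds because \(P_k\) is a product of monic polynomials in \(U\), and lex order restricted to powers of \(U\) is ordered by degree. Everything else is standard Gr\"obner bookkeeping.
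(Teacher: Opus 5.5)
Your proposal is correct and follows essentially the same route as the paper: you take the contractions from Proposition~\ref{prop:toruslink-Ek}, use \(\LM(P_k(U)f)=U^d\LM(f)\) to obtain \(\inop(P_k(U)\mathfrak a)=(U^dt_1,\dots,U^dt_\ell)\), and then check reducedness. There is one small slip in that last check: for \(i=1\) the monomial \(U^jt_1\) has \(t_1\)-degree \(j+1\), not \(j\), which equals \(d\) when \(j=d-1\), so you should instead compare the \(t_s\)-degree for some \(s\ne i\) (possible since \(\ell>1\)), or simply note that \(U^jt_i\) has total degree \(j\ell+1<d\ell+1\).
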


\begin{proof}
The assertion for \(k=\ell\) is immediate because \(P_\ell(U)\) is
monic.  Fix \(k<\ell\), and put \(d:=\deg_U P_k\).  Since
\(\{t_1-1,\dots,t_\ell-1\}\) is the reduced Gr\"obner basis of
\(\mathfrak a\) and
\(\LM(P_k(U)f)=U^d\LM(f)\) for every nonzero \(f\in P\),
\[
 \inop\bigl(P_k(U)\mathfrak a\bigr)
 =U^d\inop(\mathfrak a)
 =(U^d t_1,\dots,U^d t_\ell).
\]
Thus the displayed generators form a Gr\"obner basis.  They are monic,
and every nonleading monomial of \(P_k(U)(t_i-1)\) is of the form
\(U^d\), \(U^s\), or \(U^s t_i\) with \(s<d\); none is divisible by any
\(U^d t_j\).  Hence the basis is reduced.
\end{proof}

\subsection{Two-component pretzel links}\label{subsec:pretzel-two}

We consider the oriented ordered link
\[
L=P(2k,2r-1,2m)
\qquad (k,m,r\ge 1).
\]
We use \(r\) for the middle twist parameter so that \(\ell\) continues to
denote the number of link components.  The argument has three steps: we
first reduce the second elementary ideal to a three-generated ideal in
\(\Q[x,y]\), then determine its cyclotomic fibers, and finally assemble the
local data into a lexicographic Gr\"obner basis.

The fundamental group has the presentation
\[
\pi_1(S^3\setminus \nu(L))
 =\langle a,b,c\mid r_1,r_2\rangle,
\]
where
\[
\begin{aligned}
r_1&=(cb)^r b^{-1}(cb)^{-r}(ac^{-1})^m c(ca^{-1})^m,\\
r_2&=(ab)^{-k}b(ab)^k(cb)^{r-1}c^{-1}(cb)^{-(r-1)}.
\end{aligned}
\]
This presentation is obtained from the Wirtinger presentation of Figure~\ref{fig:torus-pretzel},
or from
\cite[Proposition~2.1]{Zen} by taking
\[
(p_1,p_2,p_3)=(2k,2r-1,2m),
\qquad
(s_1,s_2,s_3)=(a,b^{-1},c).
\]
The generators \(a,b,c\) are the oriented meridians shown in
Figure~\ref{fig:torus-pretzel}, with \(a\) on the first (outer) component and
\(b,c\) on the second (inner) component.  Since \(\phi\) sends each meridian to
its linking numbers with the components, reading these off the figure gives
\(\phi(a)=e_1\) and \(\phi(b)=\phi(c)=e_2\).
Thus the induced group-ring homomorphism sends
\[
a\longmapsto t_1=:y,
\qquad
b,c\longmapsto t_2=:x,
\]
so that the fixed order \(t_1>t_2\) becomes \(\lex(y>x)\).  Set
\[
R:=\Q[x,y],
\qquad
\Lambda:=\Q[x^{\pm1},y^{\pm1}],
\qquad
q:=2r-1,
\]
and define
\[
A_q(x):=\frac{1+x^q}{1+x},
\qquad
G_k(x,y):=\frac{1-(xy)^k}{1-xy},
\qquad
H_m(x,y):=\frac{x^m-y^m}{x-y}.
\]
These are polynomials: \(q\) is odd, and the latter two expressions are
finite geometric sums.

\begin{proposition}[Reduction to a three-generated ideal]
\label{prop:pretzel-reduction}
Let \(I:=\PP(E_2(L))\subset R\), where we use the identification
\(t_1=y\), \(t_2=x\).  Then
\[
I=\ideal{A_q(x),G_k(x,y),H_m(x,y)}.
\]
Consequently,
\[
\AG_2(L)=\mathcal G_{\lex(y>x)}(I),
\]
where \(\mathcal G_{\lex(y>x)}(I)\) denotes the reduced Gr\"obner basis of
\(I\subset R\) for the order \(\lex(y>x)\).  Moreover, in \(R/I\) one has
\((xy)^k=1\) and \(x^m=y^m\).
\end{proposition}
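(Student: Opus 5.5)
The plan is to compute \(E_2(L)\) directly from the given two-relator presentation. There are \(n=3\) generators, so \(E_2(L)=\Fitt_2(\mathcal A)=I_{1}(A)\). This is just the ideal of \(\Lambda\) generated by the six entries \(\Phi(\partial r_i/\partial s)\) for \(i=1,2\) and \(s\in\{a,b,c\}\), with \(a\mapsto y\) and \(b,c\mapsto x\). The proof then has three steps:
\begin{enumerate}[(1)]
\item compute these six Laurent polynomials in closed form;
\item show that, up to units of \(\Lambda\) and \(\Lambda\)-linear combinations, they generate the same ideal as \(A_q(x)\), \(G_k(x,y)\), \(H_m(x,y)\);
\item show that \(J:=\ideal{A_q,G_k,H_m}\subset R\) is already saturated with respect to \(x\) and \(y\), so that \(\PP(J\Lambda)=J\), as in Lemma~\ref{lem:toruslink-saturation}.
\end{enumerate}

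For step (1), I will use the product rule together with the geometric-sum formulas
\[
\Phi\bigl(\partial w^N/\partial s\bigr)=\frac{\Phi(w)^N-1}{\Phi(w)-1}\,\Phi(\partial w/\partial s),
\]
and its analogue for negative powers. For example, \((ac^{-1})^m\) contributes \(\sum_{j<m}(y/x)^j\), and \((ca^{-1})^m\) contributes a similar sum in \(x/y\). These combine to a unit times \(H_m(x,y)\), possibly times a factor such as \((1-x)\) or \((x-y)\). Likewise, the subword \((ab)^{\pm k}\) in \(r_2\) contributes geometric sums in \(xy\), which produce \(G_k\). The subwords \((cb)^{r}b^{-1}(cb)^{-r}\) and \((cb)^{r-1}c^{-1}(cb)^{-(r-1)}\) produce sums in \(x^2\). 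After combining, these should yield expressions like \((1+x^{q})/(1+x)=A_q(x)\), up to units, because \(q=2r-1\) is odd.

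As a consistency check, and to remove redundant generators, I will use the fundamental formula of Fox calculus:
\[
\sum_s\Phi(\partial r_i/\partial s)(\Phi(s)-1)=0.
\]
It shows that in each row one entry is determined by the others.

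Step (2) is the main obstacle. The raw entries will be sums of products such as \((1-x)H_m+(\text{unit})A_q\) or \(G_k\cdot(\dots)+\dots\), and I must extract the three generators individually. I will first try to find entries that are, up to a unit, exactly one of \(A_q\), \(G_k\), \(H_m\), and then subtract multiples to isolate the rest. Where a generator appears only multiplied by a factor like \(1-x\) or \(x-y\), I will use comaximality modulo the other generators. Since \(A_q(1)=1\), the polynomials \(1-x\) and \(A_q(x)\) are comaximal, so a relation \((1-x)f\in J\) combined with \(A_q\in J\) gives \(f\in J\) modulo \(A_q\). I will carry this out as an explicit two-sided inclusion of ideals in \(\Lambda\).

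For step (3), saturation is easy. Since \(A_q(0)=1\), we have \((J,x)=R\), so \(J:x=J\). Since \(G_k(x,0)=1\), we have \((J,y)=R\), so \(J:y=J\). Hence \(J\) is saturated, and \(\PP(E_2(L))=J\) by the localization argument in Lemma~\ref{lem:contraction-injective}. The equality \(\AG_2(L)=\mathcal G_{\lex(y>x)}(I)\) is then Definition~\ref{def:AG} under the substitution \(t_1=y\), \(t_2=x\). Finally, the relations in \(R/I\) follow from the identities \(1-(xy)^k=(1-xy)G_k\in I\) and \(x^m-y^m=(x-y)H_m\in I\).
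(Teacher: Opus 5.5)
Your plan is correct and matches the paper's proof step for step. The Fox entries are \(-A_q\) in both rows, together with \(x^{-m}(1-x)H_m\) and \((xy)^{-k}(x-1)G_k\); the remaining two entries, \(A_q+(y-1)x^{-m}H_m\) and \(A_q-(y-1)(xy)^{-k}G_k\), follow from the fundamental formula. Then \((A_q,x-1)=R\) gives \(J\Lambda\subseteq E_2(L)\), and saturation via \(A_q(0)=1\) and \(G_k(x,0)=1\) gives \(I=J\), exactly as you describe.

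One detail: only the factor \(1-x\) (never \(x-y\)) multiplies \(H_m\) and \(G_k\) in the entries. This is what makes your comaximality step work, since \(x-y\) and \(A_q\) are not comaximal.
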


\begin{proof}
Let
\[
 A_\phi
 =\left(
   \Phi\!\left(\frac{\partial r_i}{\partial z_j}\right)
  \right)
 \in\Mat_{2\times 3}(\Lambda),
 \qquad (z_1,z_2,z_3)=(a,b,c),
\]
be the Fox matrix of the displayed presentation. 
Applying the Fox rules to \(r_1,r_2\), and using
\(\Phi(a)=y\) and \(\Phi(b)=\Phi(c)=x\), gives
\[
\begin{aligned}
 (A_\phi)_{11}
 &=x^{-1}(1-x)
   \sum_{i:\,0\le i<m}(yx^{-1})^i
  =x^{-m}(1-x)H_m,\\
 (A_\phi)_{12}
 &=(x-1)\sum_{i:\,0\le i<r}x^{2i}-x^q
  =-A_q,\\
 (A_\phi)_{21}
 &=(xy)^{-k}(x-1)
   \sum_{i:\,0\le i<k}(xy)^i
  =(xy)^{-k}(x-1)G_k,\\
 (A_\phi)_{23}
 &=(x-1)\sum_{i:\,0\le i<r-1}x^{2i}-x^{q-1}
  =-A_q,
\end{aligned}
\]
where the last sum is understood to be zero when \(r=1\).  Since
\(\Phi(r_i)=1\), the Fox fundamental identity yields
\[
 (y-1)(A_\phi)_{i1}
 +(x-1)(A_\phi)_{i2}
 +(x-1)(A_\phi)_{i3}=0
 \qquad (i=1,2).
\]
It follows that
\[
 A_\phi=
 \begin{pmatrix}
 x^{-m}(1-x)H_m
 &-A_q
 &x^{-m}\bigl(x^mA_q+(y-1)H_m\bigr)
 \\
 (xy)^{-k}(x-1)G_k
 &(xy)^{-k}\bigl((1-y)G_k+(xy)^kA_q\bigr)
 &-A_q
 \end{pmatrix}.
\]
Since the presentation has three generators,
\(E_2(L)=I_1(A_\phi)\). Multiplying individual generators of an ideal by units does not change the ideal.  Therefore
\[
 E_2(L)=I_1(A_\phi)
 =\ideal{
 A_q,\ (1-x)H_m,\ x^mA_q+(y-1)H_m,\
 (x-1)G_k,\ (1-y)G_k+(xy)^kA_q
 }_{\Lambda}.
\]

Put \(J:=\ideal{A_q,G_k,H_m}\subset R\).  The displayed generators lie
in \(J\Lambda\), so \(E_2(L)\subset J\Lambda\).  Conversely, the same
generators show that
\(A_q,(x-1)G_k,(x-1)H_m\in E_2(L)\).  Since \(A_q(1)=1\), one has
\((A_q,x-1)=R\).  Multiplying a B\'ezout relation first by \(G_k\) and then
by \(H_m\) gives \(G_k,H_m\in E_2(L)\).  Hence
\(E_2(L)=J\Lambda\).

Now \(A_q(x)\equiv1\pmod{x}\) and \(G_k(x,y)\equiv1\pmod{y}\).  Thus
\((J,x)=(J,y)=R\), so the classes of \(x\) and \(y\) are units in
\(R/J\).  Equivalently,
\(J:(xy)^\infty=J\), and localization--contraction gives
\(I=(J\Lambda)\cap R=J\).  Finally,
\((1-xy)G_k=1-(xy)^k\) and \((x-y)H_m=x^m-y^m\), which prove the
last assertion.
\end{proof}
If \(q=1\), then \(A_q=1\), and
Proposition~\ref{prop:pretzel-reduction} gives \(I=R\) and
\(\AG_2(L)=\{1\}\).  Henceforth assume \(q>1\).

Let \(\Phi_n(x)\) denote the \(n\)-th cyclotomic polynomial.  For every
divisor \(\delta\mid q\) with \(\delta>1\), put
\(\mathcal K_\delta:=\Q[x]/(\Phi_{2\delta}(x))\).Since $\Phi_{2\delta}$ is irreducible over $\mathbb{Q}$, $K_\delta$ is the cyclotomic field; in particular $K_\delta[y]$ is a PID.Denote the class
of \(x\) by \(x_\delta\).  For the root calculation
below, choose an embedding of \(\mathcal K_\delta\) into
\(\overline{\Q}\); then \(x_\delta\) is viewed as a primitive
\(2\delta\)-th root of unity.  The resulting formulas are independent of
this choice.

Set \(g:=\gcdop(m,k)\), choose \(u,v\in\Z\) with \(um+vk=g\), and
put \(e:=um-vk\).
Since \((xy)^k=1\), the classes of \(x\) and \(y\) are units in
\(R/I\), and \(y^k=x^{-k}\).  Together with \(x^m=y^m\), this gives
\[
y^g=(y^m)^u(y^k)^v=x^{um-vk}=x^e
\qquad\text{in }R/I.
\]
Any other B\'ezout pair changes \(e\) by a multiple of
\(2\lcm(m,k)\).  Hence \(x_\delta^e\) is independent of the chosen pair
whenever \(\delta\mid\lcm(m,k)\).  For a condition \(\mathsf C\), write
\(\mathbf 1_{\mathsf C}\in\{0,1\}\) for its indicator.

For every divisor \(\delta\mid q\) with \(\delta>1\), let
\[
p_\delta(y):=
\gcdop\bigl(G_k(x_\delta,y),H_m(x_\delta,y)\bigr)
\in\mathcal K_\delta[y]
\]
be chosen monic, and set \(d_\delta:=\deg_y p_\delta\).

\begin{lemma}[Cyclotomic fibers]\label{lem:pretzel-local-uniform}
For every divisor \(\delta\mid q\) with \(\delta>1\),
\begin{equation}\label{eq:pretzel-local-factor}
p_\delta(y)=
\begin{cases}
1,
& \delta\nmid\lcm(m,k),
\\[6pt]
\displaystyle
\frac{y^g-x_\delta^e}
 {(y-x_\delta)^{\mathbf 1_{\delta\mid k}}
  (y-x_\delta^{-1})^{\mathbf 1_{\delta\mid m}}},
& \delta\mid\lcm(m,k).
\end{cases}
\end{equation}
In particular,
\begin{equation}\label{eq:pretzel-local-degree}
d_\delta=
\begin{cases}
0,
& \delta\nmid\lcm(m,k),
\\[3pt]
g-\mathbf 1_{\delta\mid m}-\mathbf 1_{\delta\mid k},
& \delta\mid\lcm(m,k).
\end{cases}
\end{equation}

Let
\[
\mathcal D:=
\{\delta>1:\,\delta\mid q,\ d_\delta>0\}.
\]
If \(\mathcal D=\varnothing\), then \(I=R\).  If
\(\mathcal D\ne\varnothing\), then
\[
R/I\cong
\prod_{\delta:\,\delta\in\mathcal D}
\mathcal K_\delta[y]/(p_\delta(y)),
\qquad
\dimQ(R/I)=
\sum_{\delta:\,\delta\in\mathcal D}d_\delta\deg\Phi_{2\delta}.
\]
\end{lemma}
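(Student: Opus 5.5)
We first decompose \(R/I\) along the factor \(A_q\), then compute each fiber by comparing root sets. By Proposition~\ref{prop:pretzel-reduction}, \(I=\ideal{A_q,G_k,H_m}\). Since \(q\) is odd, \(1+x^q=\prod_{\delta\mid q}\Phi_{2\delta}(x)\), and \(\Phi_2=1+x\), so \(A_q=\prod_{\delta\mid q,\,\delta>1}\Phi_{2\delta}(x)\). These factors are distinct irreducibles, hence pairwise comaximal in \(\Q[x]\). The Chinese remainder theorem, applied after tensoring with \(R/\ideal{G_k,H_m}\), gives
\[
R/I\cong\prod_{\delta\mid q,\ \delta>1}\mathcal K_\delta[y]/\bigl(G_k(x_\delta,y),H_m(x_\delta,y)\bigr).
\]
Because \(\mathcal K_\delta[y]\) is a PID, each factor equals \(\mathcal K_\delta[y]/(p_\delta)\), which has \(\Q\)-dimension \(d_\delta\deg\Phi_{2\delta}\). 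Factors with \(d_\delta=0\) vanish, which yields the product decomposition, the dimension formula, and \(I=R\) when \(\mathcal D=\varnothing\). It remains to establish \eqref{eq:pretzel-local-factor}; then \eqref{eq:pretzel-local-degree} follows by taking degrees.

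To compute \(p_\delta\), we work in \(\overline{\Q}\) with \(\zeta:=x_\delta\), a primitive \(2\delta\)-th root of unity. Both \(G_k(\zeta,y)=\prod_{\omega^k=1,\,\omega\ne1}(\zeta y-\omega)\) and \(H_m(\zeta,y)=\prod_{\eta^m=1,\,\eta\ne1}(y-\eta\zeta)\) are separable. Their roots are \(y=\omega\zeta^{-1}\) with \(\omega^k=1\), \(\omega\ne1\), and \(y=\eta\zeta\) with \(\eta^m=1\), \(\eta\ne1\). Thus \(p_\delta\) is the product of \(y-\beta\) over common roots \(\beta\). A common root satisfies \((\zeta\beta)^k=1\) and \(\beta^m=\zeta^m\), so \(\beta^k=\zeta^{-k}\) and \(\beta^m=\zeta^m\). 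These two conditions are equivalent to the pair \(\beta^g=\zeta^{um-vk}=\zeta^e\) together with the compatibility condition \(\zeta^{mk/g\cdot(\ldots)}\) arising from \(y^k,y^m\) being determined by \(y^g\).

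The main point is to determine when the system \(\beta^k=\zeta^{-k}\), \(\beta^m=\zeta^m\) is solvable at all. Writing \(k=gk'\) and \(m=gm'\), if \(\beta^g=\zeta^e\) then \(\beta^k=\zeta^{ek'}\), and we need \(\zeta^{ek'+k}=1\). Using \(e=um-vk\) and \(um+vk=g\), we get \(ek'+k=k'(um-vk)+k'(um+vk)=2umk'=2u\,\lcm(m,k)\). Similarly \(em'-m=-2vk\,m'=-2v\,\lcm(m,k)\). Since \(\zeta\) has order \(2\delta\), solvability reduces to \(2\delta\mid 2u\,\lcm\) and \(2\delta\mid 2v\,\lcm\). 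Because \(\gcd(u,v)=1\), this holds if and only if \(\delta\mid\lcm(m,k)\).

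In that case the solution set is exactly the \(g\) roots of \(y^g-\zeta^e\). From these we remove the excluded values \(\omega=1\), i.e.\ \(\beta=\zeta^{-1}\), and \(\eta=1\), i.e.\ \(\beta=\zeta\). The point \(\beta=\zeta\) lies in the solution set precisely when \(\zeta^k=\zeta^{-k}\), i.e.\ \(\delta\mid k\). The point \(\beta=\zeta^{-1}\) lies in it precisely when \(\zeta^{-m}=\zeta^m\), i.e.\ \(\delta\mid m\). Since \(\zeta\ne\zeta^{-1}\) because \(2\delta>2\), these two removals are distinct, which gives \eqref{eq:pretzel-local-factor}. When \(\delta\nmid\lcm(m,k)\) there are no common roots, so \(p_\delta=1\).

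The principal obstacle is the bookkeeping in this step: matching the roots of \(y^g-\zeta^e\) exactly with the solutions of the system, checking that the excluded points \(\zeta^{\pm1}\) are handled correctly, and verifying that the resulting monic polynomial has coefficients in \(\mathcal K_\delta\), which holds because it is a quotient of polynomials over \(\mathcal K_\delta\).
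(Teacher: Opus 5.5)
Your proposal is correct and follows essentially the same route as the paper: a Chinese remainder decomposition of \(R/I\) along the cyclotomic factors \(\Phi_{2\delta}\) of \(A_q\), then a root-set comparison in \(\overline{\Q}\) that identifies the common solutions of \(y^m=x_\delta^m\), \(y^k=x_\delta^{-k}\) with the roots of \(y^g-x_\delta^e\), and removes \(x_\delta\) or \(x_\delta^{-1}\) according as \(\delta\mid k\) or \(\delta\mid m\). The only difference is how you decide solvability: you use the explicit B\'ezout check \(\zeta^{2u\,\lcm(m,k)}=\zeta^{2v\,\lcm(m,k)}=1\) together with \(\gcdop(u,v)=1\), whereas the paper uses the coset description \(\sigma\in\mu_m\cap x_\delta^{-2}\mu_k\) and the identity \(\mu_m\mu_k=\mu_{\lcm(m,k)}\); both arguments are valid.
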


\begin{proof}
Write \(\mu_n:=\{\zeta\in\overline{\Q}^{\,\times}:\,\zeta^n=1\}\).
The roots in \(y\) of the two specialized polynomials are simple and are
given by
\[
H_m(x_\delta,y)=0
\iff y=x_\delta\sigma
\quad(\sigma\in\mu_m\setminus\{1\}),
\]
\[
G_k(x_\delta,y)=0
\iff y=x_\delta^{-1}\tau
\quad(\tau\in\mu_k\setminus\{1\}).
\]
The simultaneous solutions of \(y^m=x_\delta^m\) and
\(y^k=x_\delta^{-k}\) are \(y=x_\delta\sigma\), where
\(\sigma\in\mu_m\cap x_\delta^{-2}\mu_k\).
To obtain the common roots of \(H_m(x_\delta,y)\) and
\(G_k(x_\delta,y)\), one must exclude
\(\sigma=1\) and \(\sigma=x_\delta^{-2}\), respectively.
This intersection is nonempty precisely when
\(x_\delta^{-2}\in\mu_m\mu_k=\mu_{\lcm(m,k)}\), or equivalently when
\(\delta\mid\lcm(m,k)\).  When nonempty, it is a
coset of \(\mu_m\cap\mu_k=\mu_g\), and hence has \(g\) elements.
For every \(\sigma\) in this coset ,the corresponding value \(y=x_\delta\sigma\) satisfies
\[
y^m=x_\delta^m,
\qquad
y^k=x_\delta^{-k},
\qquad
y^g=(y^m)^u(y^k)^v=x_\delta^e.
\]
Since \(y^g-x_\delta^e\) has exactly \(g\) simple roots, these \(g\)
values are precisely its roots.  The excluded root \(y=x_\delta\) occurs
precisely when \(\delta\mid k\), while the excluded root
\(y=x_\delta^{-1}\) occurs precisely when \(\delta\mid m\).  These roots
are distinct because \(\delta>1\).  This proves
\eqref{eq:pretzel-local-factor}, and
\eqref{eq:pretzel-local-degree} follows.

Since \(q\) is odd,
\[
A_q(x)=\prod_{\delta:\,\delta\mid q,\ \delta>1}\Phi_{2\delta}(x).
\]
The Chinese remainder theorem and
Proposition~\ref{prop:pretzel-reduction} therefore give
\[
R/I\cong
\prod_{\delta:\,\delta\mid q,\ \delta>1}
\frac{\mathcal K_\delta[y]}{(G_k(x_\delta,y),H_m(x_\delta,y))}.
\]
Because \(\mathcal K_\delta[y]\) is a PID, the ideal in the denominator
is \((p_\delta)\).  If \(\mathcal D=\varnothing\), every factor is the
zero ring, so \(I=R\).  Otherwise the zero factors, characterized by
\(p_\delta=1\), may be omitted.  Finally,
\(\dimQ(\mathcal K_\delta[y]/(p_\delta))
=d_\delta[\mathcal K_\delta:\Q]=d_\delta\deg\Phi_{2\delta}\), which
proves the dimension formula.
\end{proof}

For the uniform construction, assume \(\mathcal D\ne\varnothing\).  For
\(0\le n\le g\), define
\[
\Pi_n(x):=
\prod_{\delta:\,\delta\in\mathcal D,\ d_\delta>n}\Phi_{2\delta}(x),
\qquad
\mathcal S:=\{0,g\}\cup\{d_\delta:\,\delta\in\mathcal D\},
\]
where an empty product is \(1\).  In particular, \(\Pi_g=1\), while
\(\Pi_0\) is the product of the cyclotomic factors indexed by
\(\mathcal D\).  Every \(\delta\in\mathcal D\) divides
\(\lcm(m,k)\), and hence every factor \(\Phi_{2\delta}\) of \(\Pi_0\)
divides \(x^{2\lcm(m,k)}-1\).  Since different B\'ezout pairs change
\(e\) by a multiple of \(2\lcm(m,k)\), the residue class of \(x^e\)
modulo \(\Pi_0\) is independent of the chosen pair.  Since
\(\Pi_0(0)=1\), the class
of \(x\) is invertible in \(\Q[x]/(\Pi_0)\).  Let \(h(x)\) be the unique
polynomial satisfying
\[
\deg h<\deg\Pi_0,
\qquad
h\equiv x^e\pmod{\Pi_0},
\]
where a negative power is interpreted in \(\Q[x]/(\Pi_0)\).

For \(j\in\mathcal S\setminus\{0,g\}\), one has
\[
\frac{\Pi_0}{\Pi_j}
=\prod_{\delta:\,\delta\in\mathcal D,\ d_\delta\le j}\Phi_{2\delta}.
\]
This product is nonconstant because \(j=d_\delta\) for some
\(\delta\in\mathcal D\).  The Chinese remainder theorem, applied
coefficientwise, gives a unique monic polynomial
\(Q_j(x,y)\in\Q[x,y]\) of
\(y\)-degree \(j\) such that each coefficient has \(x\)-degree less than
\(\deg(\Pi_0/\Pi_j)\) and
\begin{equation}\label{eq:pretzel-Qj}
Q_j(x_\delta,y)=y^{j-d_\delta}p_\delta(y)
\qquad
(\delta\in\mathcal D,\ d_\delta\le j).
\end{equation}

\begin{theorem}[Uniform Gr\"obner-basis construction]
\label{thm:pretzel-uniform}
Define
\[
\mathcal B:=
\{\Pi_0(x),\ y^g-h(x)\}
\cup
\{\Pi_j(x)Q_j(x,y):\,j\in\mathcal S\setminus\{0,g\}\}.
\]
Then \(\mathcal B\) is a Gr\"obner basis of \(I\) with respect to
\(\lex(y>x)\), and
\[
\inop_{\lex}(I)=
\ideal{x^{\deg\Pi_j}y^j:\,j\in\mathcal S}.
\]
Consequently, \(\AG_2(L)\) is the reduced Gr\"obner basis obtained
by monic normalization and interreduction of \(\mathcal B\).
The displayed at-most-four-element Gr\"obner basis may contain redundant
elements before this interreduction.
\end{theorem}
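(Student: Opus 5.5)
We prove two things: first that \(\mathcal B\subset I\), and then that the monomial ideal \(M:=\ideal{x^{\deg\Pi_j}y^j:\,j\in\mathcal S}\) has \(\dimQ(R/M)=\dimQ(R/I)\). Since the leading monomials of the elements of \(\mathcal B\) generate \(M\), we have \(M\subset\inop_{\lex}(I)\). Equality of the two dimensions then forces \(M=\inop_{\lex}(I)\), because standard monomials form a basis of the quotient. Hence \(\mathcal B\) is a Gr\"obner basis, and the statement about \(\AG_2(L)\) follows from Proposition~\ref{prop:pretzel-reduction} together with uniqueness of reduced Gr\"obner bases. For the leading monomials: \(\LM(\Pi_0)=x^{\deg\Pi_0}\); the monomial \(\LM(y^g-h)=y^g\) is paired with \(\Pi_g=1\); and \(\Pi_jQ_j\) has leading monomial \(x^{\deg\Pi_j}y^j\). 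The last claim holds because \(Q_j\) is monic in \(y\) of degree \(j\) and every other term of \(\Pi_jQ_j\) has smaller \(y\)-degree, so it is lex-smaller.

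\textbf{Membership.} By Lemma~\ref{lem:pretzel-local-uniform} we have \(R/I\cong\prod_{\delta\in\mathcal D}\mathcal K_\delta[y]/(p_\delta)\). So a polynomial \(f\in R\) lies in \(I\) if and only if \(p_\delta(y)\) divides \(f(x_\delta,y)\) for every \(\delta\in\mathcal D\).

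\begin{itemize}
\item \(\Pi_0\) vanishes at every \(x_\delta\) with \(\delta\in\mathcal D\), so \(\Pi_0\in I\).
\item For \(y^g-h\), note that \(h(x_\delta)=x_\delta^e\). By \eqref{eq:pretzel-local-factor}, \(p_\delta\) divides \(y^g-x_\delta^e\).
\item For \(\Pi_jQ_j\), consider two cases. If \(d_\delta>j\), then \(\Pi_j(x_\delta)=0\). If \(d_\delta\le j\), then \eqref{eq:pretzel-Qj} gives \(Q_j(x_\delta,y)=y^{j-d_\delta}p_\delta\), which is divisible by \(p_\delta\).
\end{itemize}

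\textbf{Dimension count.} Order \(\mathcal S\) as \(0=s_0<s_1<\dots<s_N=g\). For \(s_{i}\le j<s_{i+1}\), the standard monomials of \(M\) in \(y\)-degree \(j\) are \(x^ay^j\) with \(a<\deg\Pi_{s_i}\). Since no \(d_\delta\) lies strictly between \(s_i\) and \(s_{i+1}\), we have \(\Pi_j=\Pi_{s_i}\) for such \(j\). Therefore
\[
\dimQ(R/M)=\sum_{j=0}^{g-1}\deg\Pi_j
=\sum_{j=0}^{g-1}\ \sum_{\delta\in\mathcal D,\ d_\delta>j}\deg\Phi_{2\delta}
=\sum_{\delta\in\mathcal D}d_\delta\deg\Phi_{2\delta}.
\]
This equals \(\dimQ(R/I)\) by Lemma~\ref{lem:pretzel-local-uniform}. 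The step uses \(d_\delta\le g\), which holds by \eqref{eq:pretzel-local-degree}, and uses that \(\Pi_j\) depends only on which \(d_\delta\) exceed \(j\).

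The main obstacle is bookkeeping rather than anything conceptual. We must check that every \(j\) with \(0\le j<g\) is governed by the generator indexed by the largest element of \(\mathcal S\) that is \(\le j\). We must also check that the generator \(y^g-h\) correctly accounts for the top degree. Including \(g\) in \(\mathcal S\) with \(\Pi_g=1\) is exactly what makes \(y^g\) the last leading monomial, so the count stops at \(j=g-1\).

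Finally, interreduction may drop elements. For example, when \(d_\delta=g\) for some \(\delta\), the polynomial \(Q_g\) is not formed, since \(g\notin\mathcal S\setminus\{0,g\}\). Also, \(\Pi_j=\Pi_{j'}\) for distinct \(j,j'\) can make one leading monomial divide another. This explains the remark that \(\mathcal B\) may contain redundant elements.
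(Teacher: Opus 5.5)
Your proposal is correct and takes essentially the paper's route: you show $\mathcal B\subset I$ via the cyclotomic decomposition of Lemma~\ref{lem:pretzel-local-uniform}, read off the leading-monomial ideal $M$, and count $\dimQ(R/M)=\sum_{\delta\in\mathcal D}d_\delta\deg\Phi_{2\delta}=\dimQ(R/I)$. The only difference is cosmetic: you compare $M$ directly with $\inop_{\lex}(I)$, while the paper passes through $J=\ideal{\mathcal B}$ and a chain of dimension inequalities. One small slip: the non-leading terms of $\Pi_j(x)y^j$ have the same $y$-degree as the leading term, not a smaller one, but they have smaller $x$-degree, so they are still lex-smaller.
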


\begin{proof}
Put \(J:=\ideal{\mathcal B}\).  For every
\(\delta\in\mathcal D\),
Lemma~\ref{lem:pretzel-local-uniform} gives
\(p_\delta(y)\mid y^g-x_\delta^e\).
Since \(\Pi_0(x_\delta)=0\) and \(h(x_\delta)=x_\delta^e\), the first two
elements of \(\mathcal B\) belong to \(I\).  For an intermediate index
\(j\), either \(d_\delta>j\), in which case
\(\Phi_{2\delta}\mid\Pi_j\), or \(d_\delta\le j\), in which case
\eqref{eq:pretzel-Qj} makes \(Q_j(x_\delta,y)\) a multiple of
\(p_\delta(y)\).  The local decomposition therefore gives
\(J\subset I\).

The leading monomials are
\[
\LM(\Pi_0)=x^{\deg\Pi_0},
\qquad
\LM(y^g-h)=y^g,
\qquad
\LM(\Pi_j Q_j)=x^{\deg\Pi_j}y^j.
\]
Hence, with \(M:=\ideal{x^{\deg\Pi_j}y^j:\,j\in\mathcal S}\), one has
\(M\subset\inop_{\lex}(J)\).  For every
\(0\le n\le g\), the value of \(\Pi_n\) agrees with \(\Pi_j\), where
\(j\) is the largest element of \(\mathcal S\) not exceeding \(n\).
Thus adjoining \(x^{\deg\Pi_n}y^n\) for all \(0\le n\le g\) does not
change \(M\).  Since \(\Pi_g=1\), the standard monomials of \(R/M\) are
precisely
\[
x^a y^n
\qquad
(0\le n<g,\ 0\le a<\deg\Pi_n).
\]
Consequently,
\[
\begin{aligned}
\dim_{\mathbb Q}(R/M)
&=\sum_{n:\,0\le n<g}\deg\Pi_n \\
&=\sum_{n:\,0\le n<g}
\sum_{\substack{\delta\in\mathcal D\\ d_\delta>n}}
\deg\Phi_{2\delta} \\
&=\sum_{\delta\in\mathcal D}
\bigl|\{n:0\le n<g,\ n<d_\delta\}\bigr|
\deg\Phi_{2\delta} \\
&=\sum_{\delta\in\mathcal D}
d_\delta\deg\Phi_{2\delta} \\
&=\dim_{\mathbb Q}(R/I),
\end{aligned}
\]
where the last equality is the dimension formula in
Lemma~\ref{lem:pretzel-local-uniform}.

Since \(M\subseteq\inop_{\lex}(J)\) and
\(R/M\) is finite-dimensional over \(\mathbb Q\),
the quotient \(R/\inop_{\lex}(J)\) is also
finite-dimensional.  The standard-monomial theorem gives
\(\dimQ(R/J)=\dimQ(R/\inop_{\lex}(J))\).
The inclusions \(J\subseteq I\) and
\(M\subseteq\inop_{\lex}(J)\) give exact sequences
\[
0\longrightarrow I/J
\longrightarrow R/J
\longrightarrow R/I
\longrightarrow 0, 
\]
\[
0\longrightarrow \inop_{\lex}(J)/M
\longrightarrow R/M
\longrightarrow R/\inop_{\lex}(J)
\longrightarrow 0.
\]
Consequently,
\[
\dimQ(R/I)
\le
\dimQ(R/J)
=
\dimQ(R/\inop_{\lex}(J))
\le
\dimQ(R/M).
\]
The preceding computation gives
\(\dimQ(R/I)=\dimQ(R/M)\), so equality holds throughout.  The two kernels in the
displayed exact sequences therefore have dimension zero.
Thus, we obtain \(I/J=0\) and \(\inop_{\lex}(J)/M=0\). Hence
\[
J=I,
\qquad
M=\inop_{\lex}(J)=\inop_{\lex}(I).
\]
Since the leading monomials of the elements of
\(\mathcal B\) generate \(M\), the set \(\mathcal B\) is a
Gr\"obner basis of \(I\).
\end{proof}

We now specialize the construction to the cases \(g=1\) and \(g>1\),
including the possibility \(\mathcal D=\varnothing\).

\begin{theorem}[The coprime case]\label{thm:pretzel-g1}
Assume \(g=1\).  Then
\[
\mathcal D=
\{\delta>1:\,\delta\mid q,\ \delta\mid mk,\
  \delta\nmid m,\ \delta\nmid k\}.
\]
If \(\mathcal D=\varnothing\), then \(\AG_2(L)=\{1\}\).  Otherwise,
\[
I=\ideal{\Pi_0(x),y-h(x)},
\qquad
\AG_2(L)=\{y-h(x),\Pi_0(x)\}.
\]
\end{theorem}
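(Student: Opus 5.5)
The plan is to specialize Lemma~\ref{lem:pretzel-local-uniform} and Theorem~\ref{thm:pretzel-uniform} to $g=1$.

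\emph{Description of $\mathcal D$.} When $g=1$ we have $\lcm(m,k)=mk$. By \eqref{eq:pretzel-local-degree}, $d_\delta=0$ if $\delta\nmid mk$. If $\delta\mid mk$, then $d_\delta=1-\mathbf 1_{\delta\mid m}-\mathbf 1_{\delta\mid k}$. Because $\delta>1$ and $\gcdop(m,k)=1$, $\delta$ cannot divide both $m$ and $k$. Hence $d_\delta\in\{0,1\}$, and $d_\delta=1$ exactly when $\delta\mid mk$, $\delta\nmid m$, and $\delta\nmid k$. This gives the stated description of $\mathcal D$. If $\mathcal D=\varnothing$, Lemma~\ref{lem:pretzel-local-uniform} gives $I=R$. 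Then Proposition~\ref{prop:pretzel-reduction} gives $\AG_2(L)=\mathcal G(R)=\{1\}$.

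\emph{Main case.} Assume $\mathcal D\ne\varnothing$. Every $\delta\in\mathcal D$ has $d_\delta=1=g$, so $\mathcal S=\{0,1\}=\{0,g\}$. The set of intermediate indices $\mathcal S\setminus\{0,g\}$ is therefore empty, and no $Q_j$ terms appear. Thus $\mathcal B=\{\Pi_0(x),\,y-h(x)\}$. Theorem~\ref{thm:pretzel-uniform} then shows that $\mathcal B$ is a Gr\"obner basis of $I$, with $\inop_{\lex}(I)=\ideal{x^{\deg\Pi_0},y}$. In particular $I=\ideal{\Pi_0,y-h}$.

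\emph{Reducedness.} Both elements are monic. $\Pi_0$ is monic as a product of cyclotomic polynomials, and $y-h$ has leading monomial $y$ under $\lex(y>x)$. The nonleading monomials of $\Pi_0$ are powers $x^a$ with $a<\deg\Pi_0$, so none is divisible by $y$ or by $x^{\deg\Pi_0}$. The nonleading monomials of $y-h$ are $x^a$ with $a\le\deg h<\deg\Pi_0$, which are likewise not divisible by $y$ or $x^{\deg\Pi_0}$. Hence the basis is reduced. By uniqueness of reduced Gr\"obner bases and Proposition~\ref{prop:pretzel-reduction}, $\AG_2(L)=\{y-h(x),\Pi_0(x)\}$.

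The only point needing care is the coprimality step: it rules out $d_\delta=-1$ and forces $\mathcal S=\{0,1\}$. The rest is bookkeeping.
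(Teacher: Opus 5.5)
Your proposal is correct and follows the paper's proof essentially step for step. You specialize \eqref{eq:pretzel-local-degree} using \(\lcm(m,k)=mk\), observe that \(\mathcal S=\{0,1\}\) so that Theorem~\ref{thm:pretzel-uniform} yields \(\mathcal B=\{\Pi_0(x),\,y-h(x)\}\), and check reducedness from the leading monomials \(y\) and \(x^{\deg\Pi_0}\) together with \(\deg h<\deg\Pi_0\); your explicit remark that coprimality prevents \(\delta\) from dividing both \(m\) and \(k\), so that no negative local degree can occur, spells out a detail the paper leaves implicit.
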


\begin{proof}
Here \(\lcm(m,k)=mk\).  By
\eqref{eq:pretzel-local-degree}, the positive local degrees are exactly
\(d_\delta=1\) for the displayed divisors.  If \(\mathcal D\ne\varnothing\),
then \(\mathcal S=\{0,1\}\), so
Theorem~\ref{thm:pretzel-uniform} gives the asserted ideal.  The two
polynomials are monic with leading monomials \(y\) and
\(x^{\deg\Pi_0}\), and every term of \(h\) has \(x\)-degree less than
\(\deg\Pi_0\); hence the displayed basis is reduced.  The empty case
follows from
Lemma~\ref{lem:pretzel-local-uniform}.
\end{proof}

\begin{theorem}[The case \(g>1\)]\label{thm:pretzel-gt1}
Assume \(g>1\).  Then
\[
\mathcal D=
\{\delta>1:\,\delta\mid q,\ \delta\mid\lcm(m,k)\}.
\]
If \(\mathcal D=\varnothing\), then \(\AG_2(L)=\{1\}\).  Otherwise, the
Gr\"obner basis in Theorem~\ref{thm:pretzel-uniform} consists of
\(\{\Pi_0(x),y^g-h(x)\}\), together with \(\Pi_{g-1}Q_{g-1}\) when
\(g-1\in\mathcal S\), and with \(\Pi_{g-2}Q_{g-2}\) when
\(g\ge3\) and \(g-2\in\mathcal S\).  In particular, it has at most four
elements.  Its initial ideal is generated by \(y^g\) and
\(x^{\deg\Pi_0}\), together with \(x^{\deg\Pi_{g-1}}y^{g-1}\) when
\(g-1\in\mathcal S\), and with
\(x^{\deg\Pi_{g-2}}y^{g-2}\) when
\(g\ge3\) and \(g-2\in\mathcal S\).  Moreover,
\[
\Pi_{g-1}=
\prod_{\delta:\,\delta\in\mathcal D,\ \delta\nmid m,\ \delta\nmid k}\Phi_{2\delta}.
\]
When \(g\ge3\), one also has
\[
\Pi_{g-2}=
\prod_{\delta:\,\delta\in\mathcal D,\ \delta\nmid m\ \text{or}\ \delta\nmid k}\Phi_{2\delta}.
\]
Consequently, \(\AG_2(L)\) is obtained by interreducing an
at-most-four-element basis.
\end{theorem}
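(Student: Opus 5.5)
The proof specializes Lemma~\ref{lem:pretzel-local-uniform} and Theorem~\ref{thm:pretzel-uniform} to the case \(g>1\), reading everything off the local degrees \eqref{eq:pretzel-local-degree}.

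\textbf{Step 1: the set \(\mathcal D\).} For a divisor \(\delta\mid q\) with \(\delta>1\), the local degree vanishes when \(\delta\nmid\lcm(m,k)\). If \(\delta\mid\lcm(m,k)\), then \(d_\delta=g-\mathbf 1_{\delta\mid m}-\mathbf 1_{\delta\mid k}\). Suppose \(\delta\) divided both \(m\) and \(k\). Then \(\delta\mid g\) and \(d_\delta=g-2\), which vanishes when \(g=2\). I must rule this out.

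Since \(q\) is odd, \(\delta\) is odd, and \(\delta\mid g\) gives \(\mu_\delta\subset\mu_g\). The excluded roots \(x_\delta\) and \(x_\delta^{-1}\) are distinct by the lemma. So for \(g\ge 2\) the count \(g-2\) is the correct degree. For \(g=2\), however, \(\delta\mid g=2\) is impossible for odd \(\delta>1\). Hence \(\mathbf 1_{\delta\mid m}+\mathbf 1_{\delta\mid k}=2\) forces \(\delta\mid g\), which is odd, and this requires \(g\ge3\).

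In every case \(d_\delta\ge g-2\), and \(d_\delta\ge1\) unless \(g=2\) and \(\delta\mid g\); the latter is excluded. So \(d_\delta>0\) exactly when \(\delta\mid\lcm(m,k)\), which proves the description of \(\mathcal D\).

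\textbf{Step 2: the trivial case.} If \(\mathcal D=\varnothing\), Lemma~\ref{lem:pretzel-local-uniform} gives \(I=R\), so \(\AG_2(L)=\{1\}\).

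\textbf{Step 3: the index set \(\mathcal S\).} Each \(d_\delta\) with \(\delta\in\mathcal D\) lies in \(\{g,g-1,g-2\}\):

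\begin{itemize}
\item \(d_\delta=g\) when \(\delta\) divides neither \(m\) nor \(k\);
\item \(d_\delta=g-1\) when \(\delta\) divides exactly one of them;
\item \(d_\delta=g-2\) when \(\delta\) divides both, which by Step 1 needs \(g\ge3\).
\end{itemize}

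Hence \(\mathcal S\subset\{0,g-2,g-1,g\}\), with \(g-2\) appearing only when \(g\ge3\). (When \(g=2\), \(g-2=0\) is already in \(\mathcal S\) and contributes no new element.) Substituting into \(\mathcal B\) and into \(\inop_{\lex}(I)=\ideal{x^{\deg\Pi_j}y^j: j\in\mathcal S}\) from Theorem~\ref{thm:pretzel-uniform} gives the listed elements and initial generators. The \(j=g\) generator is \(y^g\), since \(\Pi_g=1\), and the \(j=0\) generator is \(x^{\deg\Pi_0}\). This yields at most four elements.

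\textbf{Step 4: the products \(\Pi_n\).} By definition \(\Pi_{g-1}\) is the product of \(\Phi_{2\delta}\) over \(\delta\in\mathcal D\) with \(d_\delta>g-1\). By Step 3 these are exactly the \(\delta\) dividing neither \(m\) nor \(k\). Similarly, \(\Pi_{g-2}\) runs over \(d_\delta>g-2\), i.e. \(\delta\) not dividing both. The final interreduction claim is the concluding sentence of Theorem~\ref{thm:pretzel-uniform}.

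\textbf{Main obstacle.} The only delicate point is the parity argument in Step 1, which excludes \(d_\delta=0\) for \(g=2\). It uses that \(q\) is odd, so \(\delta\) is odd and cannot divide \(g=2\). I also need that when \(g\ge3\) and \(\delta\mid g\) the value is \(g-2\ge1\), so such \(\delta\) is still in \(\mathcal D\). Everything else is bookkeeping.
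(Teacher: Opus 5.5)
Your proposal is correct and follows essentially the same route as the paper's proof. The parity argument (an odd \(\delta>1\) dividing both \(m\) and \(k\) would divide \(g=2\)) determines \(\mathcal D\), the trichotomy \(d_\delta\in\{g,g-1,g-2\}\) bounds \(\mathcal S\), and the basis, the initial ideal, and the formulas for \(\Pi_{g-1}\) and \(\Pi_{g-2}\) are read off from Theorem~\ref{thm:pretzel-uniform}. One small point: the detour in Step~1 through \(\mu_\delta\subset\mu_g\) and the distinctness of the excluded roots is unnecessary, because \eqref{eq:pretzel-local-degree} already gives \(d_\delta=g-2\) in that case.
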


\begin{proof}
If \(\delta\mid\lcm(m,k)\), then
\eqref{eq:pretzel-local-degree} gives
\(d_\delta=g-\mathbf 1_{\delta\mid m}-\mathbf 1_{\delta\mid k}\).
This is positive for \(g\ge3\).  If \(g=2\) and \(d_\delta=0\), then
\(\delta\mid m\) and \(\delta\mid k\), so \(\delta\mid g=2\), which is
impossible because \(\delta>1\) divides the odd integer \(q\).  This proves
the description of \(\mathcal D\).

For \(\delta\in\mathcal D\), the possible local degrees are
\(g,g-1,g-2\), according as
\(\delta\) divides neither, exactly one, or both of \(m\) and \(k\).
Thus \(\mathcal S\setminus\{0,g\}\subset\{g-2,g-1\}\), with the
index \(g-2\) relevant only when \(g\ge3\).  The formula for
\(\Pi_{g-1}\), and also that for
\(\Pi_{g-2}\) when \(g\ge3\), follow directly from the definition of
\(\Pi_n\).  The remaining
assertions follow from Theorem~\ref{thm:pretzel-uniform}.
\end{proof}

\begin{remark}
The calculations above concern ordered links.  For the corresponding
oriented unordered link, let \(\tau\colon R\to R\) be the involution
\(\tau(f)(x,y)=f(y,x)\).  Its invariant is the indexed family
\[
\bigl(
\mathcal G_{\lex(y>x)}(I),
\mathcal G_{\lex(y>x)}(\tau(I))
\bigr).
\]
 Simply interchanging the variables in a
displayed reduced basis generates \(\tau(I)\), but the resulting basis need
not itself be reduced for the fixed order \(\lex(y>x)\).
\end{remark}

\section{Reciprocity phenomena and a possible codimension-\(\ge2\) defect}
\label{sec:reciprocity}

The computations above exhibit frequent reciprocal behavior.  This section
separates the codimension-one part forced by Blanchfield duality from full
ideal reciprocity.
We do not address whether full ideal reciprocity holds in general.

Throughout this section, we take \(\K=\Q\), and 
codimension means Krull codimension in
\(\operatorname{Spec}\Lambda\).
Let \(X\) be a connected
compact oriented \(3\)-manifold whose nonempty boundary is a disjoint union
of tori.  Put
\[
 H:=H_1(X;\Z)/\operatorname{Tor}H_1(X;\Z),
\]
choose an ordered basis \(H\cong\Z^r\), and let
\(\phi\colon\pi_1(X)\twoheadrightarrow H\) be the maximal free-abelian
quotient.  We write
\[
 \Lambda:=\Q[H]\cong\Q[t_1^{\pm1},\dots,t_r^{\pm1}],
 \qquad
 P:=\Q[t_1,\dots,t_r].
\]
The assignments \(\overline{t_i}=t_i^{-1}\) define an involution of
\(\Lambda\).  For an ideal \(I\subset\Lambda\), write
\(\overline I:=\{\overline f\mid f\in I\}\).

\subsection{Determinant-divisor reciprocity}
\label{subsec:determinant-reciprocity}

\begin{definition}\label{def:determinant-divisor}
Let \(R\) be a UFD and let \(N\) be a finitely presented \(R\)-module.
The \emph{\(k\)th determinant divisor} \(\mathfrak D_k(N)\) is the smallest
principal ideal containing \(\Fitt_k(N)\).  Equivalently, it is generated
by a greatest common divisor of any finite generating set of
\(\Fitt_k(N)\); in particular, \(\mathfrak D_k(N)=(0)\) when
\(\Fitt_k(N)=(0)\).
\end{definition}

For a \(\Lambda\)-module \(N\), the equality
\(\overline{\mathfrak D_k(N)}=\mathfrak D_k(N)\) will be called
\emph{divisorial reciprocity}; the stronger equality
\(\overline{\Fitt_k(N)}=\Fitt_k(N)\) will be called
\emph{full ideal reciprocity}.  Blanchfield's determinant divisors are the
principal gcd ideals in Definition~\ref{def:determinant-divisor}, rather
than the determinantal ideals themselves.  Blanchfield works with the
greatest common divisor of the order-\((n-i)\) minors of a presentation with
\(n\) generators.  In the notation of the finite presentation
\(R^{q}\xrightarrow{\,B\,}R^{p}\to M\to0\) introduced above, that is the gcd
of the \((p-i)\times(p-i)\) minors of \(B\), which by
Definition~\ref{def:determinant-divisor} is our \(\mathfrak D_i(M)\).  Thus
Blanchfield's \(A_i(M)\) is the object we denote \(\mathfrak D_i(M)\).
Write \(\mathcal A(X,\phi):=H_1(X,*;\Lambda)\).By Theorem\ref{thm:CF-invariance},this module agrees with  $A(\pi_1(X),\phi).$

\begin{theorem}[Blanchfield reciprocity for determinant divisors]
\label{thm:determinant-reciprocity}
For every \(k\ge0\),
\[
 \overline{\mathfrak D_k\bigl(\mathcal A(X,\phi)\bigr)}
 =\mathfrak D_k\bigl(\mathcal A(X,\phi)\bigr).
\]
In particular, the conclusion applies to the canonical meridional
coefficient system of every oriented ordered classical link.
\end{theorem}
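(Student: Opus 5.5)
The plan is to reduce the statement to Blanchfield's duality theorem for the torsion of the Alexander module. The key fact is that over the UFD \(\Lambda\), each determinant divisor \(\mathfrak D_k(N)\) depends only on \(N\) after localization at height-one primes. Concretely, for every height-one prime \(\mathfrak p=(\pi)\), the \(\pi\)-adic valuation of a generator of \(\mathfrak D_k(N)\) equals the minimal valuation of the \((p-k)\)-minors. Since \(\Lambda_{\mathfrak p}\) is a DVR, this valuation is read off from the structure theorem: if \(N_{\mathfrak p}\cong\Lambda_{\mathfrak p}^{\beta}\oplus\bigoplus_j\Lambda_{\mathfrak p}/(\pi^{a_j})\) with \(a_1\ge a_2\ge\cdots\), then \(\mathfrak D_k(N)=0\) for \(k<\beta\), and otherwise \(v_\pi(\mathfrak D_k)=\sum_{j>k-\beta}a_j\). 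Hence it suffices to show two things: the rank \(\beta\) is unchanged by the involution, and for every \(\pi\) the elementary divisors of \(\mathcal A(X,\phi)\) at \(\pi\) coincide with those at \(\overline\pi\).

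First I would pass from the relative module to the absolute one. By Remark~\ref{rem:absolute-relative}, the sequence \(0\to H_1(X;\Lambda)\to\mathcal A(X,\phi)\to\ker\varepsilon\to0\) holds. The ideal \(\ker\varepsilon=\mathfrak a\Lambda\) has height \(r\), so it is torsion-free of rank one, and \(\mathfrak a_{\mathfrak p}=\Lambda_{\mathfrak p}\) at every height-one prime when \(r\ge2\). When \(r=1\) it is free. In either case the sequence splits after localizing at any height-one \(\mathfrak p\), giving \(\mathcal A_{\mathfrak p}\cong H_1(X;\Lambda)_{\mathfrak p}\oplus\Lambda_{\mathfrak p}\). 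So the localized invariants of \(\mathcal A\) are those of \(H_1(X;\Lambda)\) with the free rank shifted by one. In particular, \(\mathfrak D_k(\mathcal A)\) is computed from the rank and the height-one elementary divisors of \(H_1(X;\Lambda)\), and the shift by one is symmetric.

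Next, I would invoke Blanchfield duality for \(H_1(X;\Lambda)\). Poincaré--Lefschetz duality together with the universal coefficient spectral sequence gives \(H_1(X;\Lambda)\) and \(H_1(X,\partial X;\Lambda)\) as conjugate-dual up to pseudo-null modules, i.e.\ modules supported in codimension \(\ge2\). Since \(\partial X\) is a union of tori and \(\phi\) is the maximal free-abelian quotient, the boundary terms \(H_*(\partial X;\Lambda)\) are pseudo-null or free in a controlled way. Each torus has \(H_*\) over \(\Lambda\) equal to an induced module from a rank-\(\le2\) subgroup, and its torsion is annihilated by elements \(t^{v}-1\) that are self-conjugate up to units. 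From this I would conclude that the torsion submodule \(T\) of \(H_1(X;\Lambda)\) satisfies \(T_{\mathfrak p}\cong\overline{\operatorname{Ext}^1(T,\Lambda)}_{\mathfrak p}\) at height-one primes. Because \(\operatorname{Ext}^1(\Lambda_{\mathfrak p}/\pi^a,\Lambda_{\mathfrak p})\cong\Lambda_{\mathfrak p}/\pi^a\), conjugation then transports the elementary divisors at \(\pi\) to those at \(\overline\pi\). The rank is obviously preserved. This is precisely Blanchfield's theorem on the symmetry of the \(A_i\), and by the identification made just before the statement I can cite it directly. For classical links, the final sentence follows because \(H_1(X_L;\Z)\cong\Z^\ell\) is torsion-free, so the meridional system is the maximal free-abelian one.

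I expect the main obstacle to be the boundary contribution, and in particular showing that the torus boundary components do not introduce asymmetric height-one torsion. The troublesome cases are the low-rank ones (\(r=1\), or a boundary torus whose image in \(H\) has rank one), where \(H_1(\partial X;\Lambda)\) can carry divisorial torsion of the form \(t^{v}-1\). I would handle these by noting that \(\overline{t^v-1}=-t^{-v}(t^v-1)\) is an associate, so any such terms are self-reciprocal and cannot break the symmetry.
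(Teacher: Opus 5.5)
Your proposal is correct and is essentially the paper's proof: both cite Blanchfield's symmetry of the determinant divisors of the absolute module \(H_1(X;\Lambda)\), and both transfer it to \(\mathcal A(X,\phi)\) through \(0\to H_1(X;\Lambda)\to\mathcal A(X,\phi)\to\mathfrak a\to0\); your splitting over the DVRs \(\Lambda_{\mathfrak p}\) is exactly the content of the rank formula the paper takes from Blanchfield's Lem.~4.10, giving \(\mathfrak D_k(\mathcal A(X,\phi))=\mathfrak D_{k-1}(H_1(X;\Lambda))\) for \(k\ge1\). The one step you omit is that Blanchfield works over \(\Z[H]\), so the citation needs the paper's flat base change to \(\Q[H]\) by inverting nonzero integers (Fitting ideals and their gcd ideals localize compatibly); with the citation in place your duality sketch is unnecessary, though its boundary step does hold in your prime-by-prime setting, since every height-one prime dividing some \(t^v-1\) is self-conjugate and the required equality of valuations there is automatic.
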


\begin{proof}
Let \(R_{\Z}:=\Z[H]\), let \(\widetilde X\) be the maximal free-abelian
cover, and regard
\(M_{\Z}:=H_1(\widetilde X;\Z)\) as an \(R_{\Z}\)-module.  The conclusion
of Blanchfield's boundary case gives
\[
 \overline{\mathfrak D_j(M_{\Z})}=\mathfrak D_j(M_{\Z})
 \qquad(j\ge0);
\]
see \cite[Thms.~4.5 and~4.7, Cor.~4.8, Lem.~4.10,
Thm.~5.5, and Cor.~5.6]{Bla}.  Localizing \(R_{\Z}\) at the nonzero
integers gives \(\Lambda\), and flat base change yields
\[
 M:=H_1(X;\Lambda)\cong\Lambda\otimes_{R_{\Z}}M_{\Z}.
\]
Fitting ideals commute with this base change, and their principal gcd
ideals localize accordingly.  More precisely, for a multiplicative set \(S\),
\(
\Fitt_i(S^{-1}N)=S^{-1}\Fitt_i(N),
\qquad
\mathfrak D_i(S^{-1}N)=S^{-1}\mathfrak D_i(N).
\)

Since the involution fixes the localized
integers,
\[
 \overline{\mathfrak D_j(M)}=\mathfrak D_j(M)
 \qquad(j\ge0).
\]

The long exact sequence of \((X,*)\) contains
\[
 0\longrightarrow M\longrightarrow\mathcal A(X,\phi)
 \longrightarrow\mathfrak a\longrightarrow0,
 \qquad
 \mathfrak a:=\ker(\varepsilon\colon\Lambda\longrightarrow\Q),
\]
where \(\varepsilon(t_i)=1\).  Since
\(\chi(X)=\frac12\chi(\partial X)=0\), while \(b_0(X)=1\) and
\(b_3(X)=0\), one has \(r=b_1(X)=1+b_2(X)\ge1\).  Hence
\(\mathfrak a\) is a nonzero ideal of the domain \(\Lambda\), and is
therefore torsion-free of rank one.  It follows that
\[
 T\bigl(\mathcal A(X,\phi)\bigr)=T(M),
 \qquad
 \operatorname{rank}_{\Lambda}\mathcal A(X,\phi)
 =\operatorname{rank}_{\Lambda}M+1,
\]
where \(T(-)\) denotes the torsion submodule.

For a finitely generated module \(N\) of rank \(q\), Blanchfield's rank
formula \cite[Lem.~4.10]{Bla} gives
\[
 \mathfrak D_i(N)=
 \begin{cases}
  (0),&i<q,\\
  \mathfrak D_{i-q}\bigl(T(N)\bigr),&i\ge q.
 \end{cases}
\]
Put \(s:=\operatorname{rank}_{\Lambda}M\).  The preceding identities show
that \(\mathfrak D_0(\mathcal A(X,\phi))=(0)\) and, for every \(k\ge1\),
\[
 \mathfrak D_k\bigl(\mathcal A(X,\phi)\bigr)=\mathfrak D_{k-1}(M).
\]
Indeed, both sides vanish for \(k\le s\), and for \(k\ge s+1\) both are
\(\mathfrak D_{k-s-1}(T(M))\).  The theorem now follows from the
reciprocity of the determinant divisors of \(M\).
\end{proof}

\begin{corollary}[Reciprocity of the common divisor of \(\AG_k\)]
\label{cor:AG-common-divisor-reciprocity}
Let \(L\subset S^3\) be an oriented ordered link.  If
\(\AG_k(L)=\emptyset\), put \(\Delta_k=0\); otherwise, let \(\Delta_k\)
be a greatest common divisor of the elements of \(\AG_k(L)\).  Then
\[
 (\overline{\Delta_k})=(\Delta_k)
\]
as principal ideals of \(\Lambda\).  If \(\Delta_k\ne0\), there are
\(c\in\Q^\times\) and \(a_1,\dots,a_\ell\in\Z\) such that
\[
 \Delta_k(t_1^{-1},\dots,t_\ell^{-1})
 =c\,t_1^{a_1}\cdots t_\ell^{a_\ell}
  \Delta_k(t_1,\dots,t_\ell).
\]
\end{corollary}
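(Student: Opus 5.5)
The plan is to deduce the corollary from Theorem~\ref{thm:determinant-reciprocity}, applied to the link exterior \(X=X_L\), by identifying \((\Delta_k)\subset\Lambda\) with the determinant divisor \(\mathfrak D_k(\mathcal A(X_L,\phi))\). For a classical link, \(X_L\) is a connected compact oriented \(3\)-manifold whose boundary is a nonempty union of \(\ell\) tori, and \(H_1(X_L;\Z)\cong\Z^\ell\) is torsion-free. Its maximal free-abelian quotient is therefore the meridional epimorphism \(\phi\) with ordered basis \(e_1,\dots,e_\ell\), so the \(\Lambda\) of Section~\ref{sec:reciprocity} is the ring used to define \(\AG_k(L)\). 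This is exactly the case singled out in the last sentence of Theorem~\ref{thm:determinant-reciprocity}. Together with Theorem~\ref{thm:CF-invariance}, this gives \(E_k(L)=\Fitt_k(\mathcal A(X_L,\phi))\).

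Next we show that \((\Delta_k)\Lambda=\mathfrak D_k(\mathcal A(X_L,\phi))\). If \(\AG_k(L)=\emptyset\), then \(\PP(E_k(L))=0\), and Lemma~\ref{lem:contraction-injective} gives \(E_k(L)=0\). Hence \(\mathfrak D_k=(0)=(\Delta_k)\). Otherwise \(\AG_k(L)\) generates \(\PP(E_k(L))\) in \(P\), since a Gröbner basis generates its ideal. Because \(\Lambda=S^{-1}P\), the same finite set generates \(S^{-1}\PP(E_k(L))=E_k(L)\) in \(\Lambda\). By Definition~\ref{def:determinant-divisor}, \(\mathfrak D_k\) is generated by a gcd in \(\Lambda\) of any finite generating set of \(\Fitt_k\), in particular of \(\AG_k(L)\).

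The only subtle step is comparing a gcd computed in \(P\) with one computed in \(\Lambda\). Both rings are UFDs, the primes of \(\Lambda\) are the primes of \(P\) other than the associates of the \(t_i\), and the \(t_i\) become units in \(\Lambda\). So a gcd \(\Delta_k\) formed in \(P\) remains a gcd in \(\Lambda\): any extra monomial factors become units, and a common divisor in \(\Lambda\) can be cleared of denominators by multiplying by a monomial. Either reading of ``greatest common divisor'' therefore yields the same principal ideal \((\Delta_k)\Lambda=\mathfrak D_k\).

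Theorem~\ref{thm:determinant-reciprocity} now gives \((\overline{\Delta_k})=\overline{(\Delta_k)}=(\Delta_k)\) in \(\Lambda\). If \(\Delta_k\ne0\), then \(\overline{\Delta_k}\) and \(\Delta_k\) generate the same principal ideal in the domain \(\Lambda\), so they differ by a unit. By the description of \(\Lambda^\times\) in Example~\ref{ex:k1}, such a unit has the form \(c\,t_1^{a_1}\cdots t_\ell^{a_\ell}\) with \(c\in\Q^\times\) and \(a_i\in\Z\), which is the displayed identity.

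I expect no serious obstacle. The points that need care are checking that the meridional system is the maximal free-abelian one, and the gcd bookkeeping between \(P\) and \(\Lambda\) in the previous paragraph.
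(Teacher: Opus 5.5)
Your proposal is correct and follows the paper's proof. Like the paper, you identify \((\Delta_k)\Lambda\) with \(\mathfrak D_k(\mathcal A(X_L,\phi))\): the reduced Gr\"obner basis generates \(E_k(L)\cap P\), and its extension recovers \(E_k(L)\) by Lemma~\ref{lem:contraction-injective}; you then conclude with Theorem~\ref{thm:determinant-reciprocity} and the description of \(\Lambda^\times\). Your explicit comparison of gcds in \(P\) and in \(\Lambda\) spells out what the paper covers more briefly by observing that the saturated ideal \(E_k(L)\cap P\) has a gcd with no variable factor.
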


\begin{proof}
The set \(\AG_k(L)\) generates \(\PP(E_k(L))\), and extension to
\(\Lambda\) recovers \(E_k(L)\) by
Lemma~\ref{lem:contraction-injective}.  Thus
\((\Delta_k)\Lambda=\mathfrak D_k(\mathcal A(X_L,\phi))\); since \(E_k(L)\cap P\) is saturated with respect to the variables, its nonzero gcd has no variable factor.The first
assertion follows from Theorem~\ref{thm:determinant-reciprocity}, and the
second from the description of the units of \(\Lambda\).
\end{proof}

\subsection{Height-one reciprocity and a finite criterion}
\label{subsec:height-one-reciprocity}

For \(0\ne g\in P\), define its \emph{polynomial reciprocal} by
\[
 g^\vee:=t_1^{\deg_{t_1}g}\cdots t_r^{\deg_{t_r}g}
 g(t_1^{-1},\dots,t_r^{-1})\in P.
\]

\begin{proposition}[Height-one reciprocity and reciprocal multipliers]
\label{prop:codimension-two-defect}
Fix \(k\ge0\), put
\(E:=\Fitt_k(\mathcal A(X,\phi))\), \(J:=E\cap P\), and let
\(\mathcal G:=\mathcal G_P(J)\).  Then
\[
 E_{\mathfrak p}=(\overline{E})_{\mathfrak p}
\]
for every height-one prime \(\mathfrak p\subset\Lambda\).  Moreover, there
are nonzero polynomials \(a_1,\dots,a_s\in P\) such that
\[
 \gcdop(a_1,\dots,a_s)=1,
 \qquad
 a_j g^\vee\in J
 \quad(g\in\mathcal G,\ 1\le j\le s).
\]
\end{proposition}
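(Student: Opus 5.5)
When \(E\neq0\), the plan is to compare \(E\) and \(\overline E\) one height-one prime at a time, using Theorem~\ref{thm:determinant-reciprocity}. The reciprocal multipliers will then come from the colon ideal \((E:_\Lambda\overline E)\). The degenerate case \(E=(0)\) is trivial: then \(\overline E=(0)\) as well, \(\mathcal G=\varnothing\), and we may take \(s=1\), \(a_1=1\).

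\emph{Height-one equality.} Let \(\mathfrak p\) be a height-one prime of \(\Lambda\). Since \(\Lambda\) is a UFD, \(\mathfrak p=(\pi)\) for a prime element \(\pi\), and \(\Lambda_{\mathfrak p}\) is a DVR. Hence \(E_{\mathfrak p}=\pi^{v}\Lambda_{\mathfrak p}\), where \(v=\min_{f\in E}v_\pi(f)\). Taking the minimum over a finite generating set shows that \(v=v_\pi(\mathfrak D_k)\), with \(\mathfrak D_k:=\mathfrak D_k(\mathcal A(X,\phi))\). The involution is a ring automorphism, so \(\overline E\) is generated by the conjugates of generators of \(E\). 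Therefore \(\overline E\) has gcd \(\overline{\mathfrak D_k}\), and
\[
(\overline E)_{\mathfrak p}=\pi^{v_\pi(\overline{\mathfrak D_k})}\Lambda_{\mathfrak p}.
\]
By Theorem~\ref{thm:determinant-reciprocity}, \(\overline{\mathfrak D_k}=\mathfrak D_k\), so the two exponents agree and \(E_{\mathfrak p}=(\overline E)_{\mathfrak p}\).

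\emph{Reciprocal multipliers.} Put \(C:=(E:_\Lambda\overline E)\). Since \(\overline E\) is finitely generated, colon ideals commute with localization. The first step therefore gives \(C_{\mathfrak p}=(E_{\mathfrak p}:(\overline E)_{\mathfrak p})=\Lambda_{\mathfrak p}\) for every height-one \(\mathfrak p\). Hence \(C\) lies in no height-one prime, i.e.\ no prime element divides every element of \(C\).

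Next pass to \(C\cap P\), which is saturated with respect to each \(t_i\) because \(C\) is an ideal of \(\Lambda\). Choose finitely many generators \(a_1,\dots,a_s\) of \(C\cap P\). By Lemma~\ref{lem:contraction-injective}, these generate \(C\) in \(\Lambda\), so their gcd in \(P\) is at most a monomial. It cannot be divisible by a variable \(t_i\): otherwise every element of \(C\cap P\) would be divisible by \(t_i\). Dividing by \(t_i\) stays inside \(C\cap P\) by saturation, so repeating this would divide a fixed nonzero element by arbitrarily high powers of \(t_i\), which is absurd. Thus \(\gcd(a_1,\dots,a_s)=1\) in \(P\).

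Finally, for \(g\in\mathcal G\subset J\subset E\), the polynomial reciprocal \(g^\vee\) is a monomial times \(\overline g\), so \(g^\vee\in\overline E\cap P\). Hence \(a_jg^\vee\in E\cap P=J\).

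The main obstacle is the gcd-one claim in \(P\) rather than in \(\Lambda\). Units of \(\Lambda\) are monomials, so the \(\Lambda\)-gcd being a unit leaves a possible monomial factor; this is exactly why the saturation of \(C\cap P\) is needed.
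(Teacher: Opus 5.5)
Your proof is correct and follows the same route as the paper: the height-one equality comes from comparing $\pi$-adic valuations of the gcds in the DVR $\Lambda_{\mathfrak p}$ via Theorem~\ref{thm:determinant-reciprocity}, and the multipliers come from an ideal contained in no height-one prime, whose polynomial generators have trivial gcd. The differences are minor. You use the colon ideal $(E:_\Lambda\overline E)$, which contains the paper's self-conjugate ideal $\operatorname{Ann}_\Lambda\bigl((E+\overline E)/(E\cap\overline E)\bigr)=(E:_\Lambda\overline E)\cap(\overline E:_\Lambda E)$. You also rule out a monomial factor by saturation of $C\cap P$, whereas the paper divides the generators by their common monomial.
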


\begin{proof}
If \(E=(0)\), both assertions are immediate, with \(a_1=1\).  Assume
\(E\ne(0)\), and write
\(\mathfrak D_k(\mathcal A(X,\phi))=(d)\).  For a height-one prime
\(\mathfrak p\), the local ring \(\Lambda_{\mathfrak p}\) is a DVR, and
the localization of a nonzero ideal is generated by the localization of
its gcd.  The gcd of \(\overline E\) is \(\overline d\), up to a unit, and
Theorem~\ref{thm:determinant-reciprocity} gives
\((\overline d)=(d)\).  Hence
\(E_{\mathfrak p}=(\overline{E})_{\mathfrak p}\).

Set
\[
 Q:=\frac{E+\overline E}{E\cap\overline E},
 \qquad
 \mathfrak c:=\operatorname{Ann}_{\Lambda}(Q).
\]
After localization at a height-one prime, the numerator and denominator
of \(Q\) both become \(E_{\mathfrak p}\); at the zero prime, the two
nonzero ideals both become the quotient field of \(\Lambda\).  Thus
\(Q_{\mathfrak p}=0\) for every prime of height at most one, so the support
of \(Q\) has codimension at least two.

The module \(Q\) is finitely generated.  If \(q_1,\dots,q_m\) generate
it and \(Q_{\mathfrak p}=0\), choose
\(s_i\notin\mathfrak p\) with \(s_i q_i=0\).  Then
\(s_1\cdots s_m\in\mathfrak c\setminus\mathfrak p\).  Applying this at
the zero prime and at every height-one prime shows that
\(\mathfrak c\ne(0)\) and that no height-one prime contains
\(\mathfrak c\).

Since \(\Lambda\) is Noetherian, choose Laurent polynomial generators
\(c_1,\dots,c_s\) of \(\mathfrak c\).  Multiplying each \(c_j\) by a
Laurent monomial unit, we obtain generators \(b_1,\dots,b_s\in P\).  If the
gcd of the \(b_j\) in \(P\) had an irreducible factor other than a
variable, then \(\mathfrak c\) would be contained in the corresponding
height-one prime of \(\Lambda\).  Hence this gcd is a monomial in \(t_1,\dots,t_r\), up to a nonzero
scalar.  Because such a monomial is a unit in \(\Lambda\), dividing all
\(b_j\) by their common monomial factor produces nonzero polynomials
\(a_1,\dots,a_s\in\mathfrak c\cap P\) with
\(\gcdop(a_1,\dots,a_s)=1\).

Finally, \(\mathfrak cQ=0\) implies
\(\mathfrak c(E+\overline E)\subset E\cap\overline E\), and therefore
\(\mathfrak c\,\overline E\subset E\).  For \(g\in\mathcal G\), the
polynomial \(g^\vee\) is a Laurent monomial multiple of \(\overline g\).
Thus \(a_j g^\vee\in E\cap P=J\) for all \(g\in\mathcal G\) and all
\(j\).
\end{proof}

The quotient \(Q\) in the proof records the possible failure of full ideal
reciprocity, and its support has codimension at least two.  The multiplier
conclusion is weaker than full ideal reciprocity: when \(r>1\), the
condition \(\gcdop(a_1,\dots,a_s)=1\) need not imply
\(\ideal{a_1,\dots,a_s}=P\).

\begin{proposition}[A Gr\"obner-basis criterion for full ideal reciprocity]
\label{prop:groebner-full-reciprocity}
Let \(E\subset\Lambda\) be any ideal, put \(J:=E\cap P\), and let
\(\mathcal G:=\mathcal G_P(J)\).  The following are equivalent.
\begin{enumerate}[(i)]
\item \(\overline E=E\).
\item \(g^\vee\in J\) for every \(g\in\mathcal G\).
\item \(\NF_{\mathcal G}(g^\vee)=0\) for every \(g\in\mathcal G\), where
\(\NF_{\mathcal G}\) denotes the normal form with respect to
\(\mathcal G\).
\end{enumerate}
\end{proposition}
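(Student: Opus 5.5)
We prove the cycle of implications (i)\(\Rightarrow\)(ii)\(\Rightarrow\)(i) and (ii)\(\Leftrightarrow\)(iii), using only Lemma~\ref{lem:contraction-injective}, the fact that the involution is a ring automorphism of \(\Lambda\), and the normal-form membership criterion. Two elementary observations do the work. First, for \(0\ne g\in P\), the polynomial \(g^\vee\) equals a Laurent monomial times \(\overline g\), and such a monomial is a unit of \(\Lambda\); this was already noted in the proof of Proposition~\ref{prop:codimension-two-defect}. Second, since \(\mathcal G\) generates \(J\) and \(J\Lambda=E\) by Lemma~\ref{lem:contraction-injective}, the set \(\mathcal G\) generates \(E\) as a \(\Lambda\)-ideal. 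Consequently \(\{\overline g: g\in\mathcal G\}\), and hence \(\{g^\vee: g\in\mathcal G\}\), generates \(\overline E\), because \(f\mapsto\overline f\) is a ring automorphism.

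For (i)\(\Rightarrow\)(ii), assume \(\overline E=E\). For \(g\in\mathcal G\subset E\) we have \(\overline g\in\overline E=E\), so \(g^\vee\), a unit multiple of \(\overline g\), lies in \(E\). Since \(g^\vee\) is a polynomial, it lies in \(E\cap P=J\).

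For (ii)\(\Rightarrow\)(i), the hypothesis gives \(g^\vee\in J\subset E\) for all \(g\in\mathcal G\). Since the \(g^\vee\) generate \(\overline E\), we get \(\overline E\subset E\). Applying the involution, which has order two, yields \(E=\overline{\overline E}\subset\overline E\), so \(E=\overline E\). The zero ideal case is trivial: then \(\mathcal G=\emptyset\) and all three conditions hold vacuously.

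For (ii)\(\Leftrightarrow\)(iii), this is precisely the normal-form membership criterion for the Gr\"obner basis \(\mathcal G\) of \(J\): an element \(f\in P\) lies in \(J\) if and only if \(\NF_{\mathcal G}(f)=0\).

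No step is a real obstacle. The one point requiring care is the generation claim: we must check that \(\mathcal G\) generates \(E\) over \(\Lambda\) and not merely \(J\) over \(P\). This is exactly extension–contraction for the localization \(\Lambda=S^{-1}P\). Without it, checking only the finitely many \(g^\vee\) would not control all of \(\overline E\).
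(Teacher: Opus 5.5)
Your proposal is correct and follows essentially the same argument as the paper. In both, \(g^\vee\) is a unit multiple of \(\overline g\); \(\mathcal G\) generates \(E=J\Lambda\) by Lemma~\ref{lem:contraction-injective}, which gives \(\overline E\subset E\), and the involution then gives equality; and (ii)\(\Leftrightarrow\)(iii) is the normal-form membership criterion.
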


\begin{proof}
If \(\overline E=E\) and \(g\in\mathcal G\), then \(g^\vee\) is a Laurent
monomial multiple of \(\overline g\), and hence belongs to \(E\cap P=J\).
Conversely, suppose that \(g^\vee\in J\) for every \(g\in\mathcal G\).
Then \(\overline g\in E\) for every such \(g\).  By
Lemma~\ref{lem:contraction-injective},
\(E=J\Lambda=\ideal{\mathcal G}_{\Lambda}\), so
\(\overline E\subset E\).  Applying the involution gives the reverse
inclusion.  The equivalence of the last two conditions is the usual
Gr\"obner-basis membership criterion.
\end{proof}

\begin{remark}[Scope of the result]\label{rem:scope-of-reciprocity}
When \(r=1\), the Laurent polynomial ring is a PID, so divisorial
reciprocity already implies full ideal reciprocity and recovers the usual
one-variable reciprocity law.  For \(r>1\),
Proposition~\ref{prop:codimension-two-defect} controls the height-one part,
and its proof places the remaining defect in codimension at least two;
Proposition~\ref{prop:groebner-full-reciprocity} gives a finite test for
its vanishing.  The Blanchfield-based statements in this section concern
the maximal free-abelian quotient; no assertion is made for an arbitrary
epimorphism onto a free abelian group.
\end{remark}

\subsection{Full reciprocity in the computed ideals}
\label{subsec:reciprocity-examples}

For every reduced Gr\"obner basis explicitly displayed in
Subsection~\ref{subsec:low-crossing-examples}, all generators except the
two described below satisfy \(g^\vee=\pm g\).  For \(\mathrm{L9a33}\), put
\(f=t_1+t_2-1\) and \(h=t_2^2-t_2+1\).  For the displayed basis of
\(P(4,4,6)\) in Example~\ref{ex:3pretzel-experiment}, let \(f_3,f_4\)
denote its third and fourth generators.  Direct calculation gives
\[
 f^\vee=(1-t_2)f+h,
 \qquad
 f_3^\vee=(t_3^2+1)f_3+t_3f_4.
\]
Proposition~\ref{prop:groebner-full-reciprocity} therefore shows that all
of these explicitly displayed ideals are fully reciprocal.  The
experimental bases of the form recorded in
Example~\ref{ex:3pretzel-experiment} likewise consist of polynomials that are reciprocal up to sign.

For the torus links of Subsection~\ref{subsec:torus-links}, one has
\[
\begin{aligned}
 D(U^{-1})&=-U^{-mn}D(U),
 &A(U^{-1})&=U^{-m(n-1)}A(U),\\
 B(U^{-1})&=U^{-n(m-1)}B(U),
 &P_\ell(U^{-1})&=U^{-(m-1)(n-1)}P_\ell(U).
\end{aligned}
\]
Since \(P_k(U)=D(U)^{\ell-1-k}\Theta(U)\) for \(k<\ell\), and
\(\overline{t_i-1}=-t_i^{-1}(t_i-1)\), the elementary ideals in
Proposition~\ref{prop:toruslink-Ek} are fully reciprocal.

For the two-component pretzel family,
Proposition~\ref{prop:pretzel-reduction} and
Lemma~\ref{lem:contraction-injective} give
\[
 E_2(L)=\ideal{A_q(x),G_k(x,y),H_m(x,y)}_{\Lambda}.
\]
The generators satisfy
\[
\begin{aligned}
 A_q(x^{-1})&=x^{1-q}A_q(x),
 &G_k(x^{-1},y^{-1})&=(xy)^{1-k}G_k(x,y),\\
 H_m(x^{-1},y^{-1})&=x^{1-m}y^{1-m}H_m(x,y).
\end{aligned}
\]
Thus \(E_2(L)\) is fully reciprocal for this family as well.

\section*{Conclusion}

For fixed coefficient and ordering data, the reduced
Gr\"obner bases studied here give canonical polynomial
representatives of higher Alexander ideals.  The explicit
computations show that these higher ideals contain
information not determined by the multivariable Alexander
polynomial.  A natural remaining question is to determine
when determinant-divisor reciprocity lifts to reciprocity
of the full ideals.

\noindent
Department of Mathematics, Institute of Science Tokyo \\
2-12-1 Ookayama, Meguro-ku, Tokyo 152-8551, Japan

\end{document}